\newcommand{\mb}{\mathbf}
\newcommand{\g}{\gamma}
\newcommand{\al}{\alpha}
\newcommand{\FD}{\mathcal{F}_D}
\newcommand{\bp}{\theta} 
\newcommand{\dl}{\mb{d}_{\mb{1}}} 
\newcommand{\dr}{\mb{d}_{\mb{n}}} 
\newcommand{\pe}{\varepsilon}
\newcommand{\bs}{\boldsymbol}
\newcommand{\FDe}{\mathcal{F}_D^{\varepsilon}}
\newcommand{\pep}{\xi}
\newcommand{\pepp}{\eta}
\newcommand{\FDp}{\mathcal{F}_D^{\pep}}
\newcommand{\FDpp}{\mathcal{F}_D^{\pepp}}
\newcommand{\FDpL}{{\hat{\mathcal{F}}}_D^{\pep}}
\newcommand{\FDpLB}{{\hat{\mathcal{F}}}_{D,B}^{\pep}}
\newcommand{\FN}{\mathcal{F}_N}
\newcommand{\tFD}{\tilde{\mathcal{F}}_D}
\newcommand{\tFN}{\tilde{\mathcal{F}}_N}
\newcommand{\tFDx}{\tilde{\mathcal{F}}_{Dx}}
\newcommand{\tFDy}{\tilde{\mathcal{F}}_{Dy}}
\newcommand{\rev}[1]{{\color{black}#1}}
\newcommand{\vertiii}[1]{{\left\vert\kern-0.25ex\left\vert\kern-0.25ex\left\vert #1 
    \right\vert\kern-0.25ex\right\vert\kern-0.25ex\right\vert}}
\newtheorem{theorem}{Theorem}
\newtheorem{definition}{Definition}
\newtheorem{lemma}{Lemma}
\newtheorem{remark}{Remark}
\begin{document}
\title{Stable and high-order accurate finite difference methods for the diffusive viscous wave equation}
\author{Siyang Wang\thanks{Department of Mathematics and Mathematical Statistics, {Ume\aa} University, Sweden (siyang.wang@umu.se)}}

\maketitle
\begin{abstract}
The diffusive viscous wave equation describes wave propagation in diffusive and viscous media. Examples include seismic waves traveling through the Earth's crust, taking into account of both the elastic properties of rocks and the dissipative effects due to internal friction and viscosity; acoustic waves propagating through biological tissues, where both elastic and viscous effects play a significant role. We propose a stable and high-order finite difference method for solving the governing equations. By designing the spatial discretization with the summation-by-parts property, we prove stability by deriving a discrete energy estimate. In addition, we derive error estimates for problems with constant coefficients using the normal mode analysis and for problems with variable coefficients using the energy method. Numerical examples are presented to demonstrate the stability and accuracy properties of the developed method.
\end{abstract}

\noindent \textbf{Keywords}: The diffusive viscous wave equation, Summation by parts, Finite difference methods, Stability, Error estimate\\
\noindent \textbf{MSC}: 65M06, 65M12



\section{Introduction}

Propagation of waves presents a complex challenge in domains characterized by vast expanses and material heterogeneity, where waves travel considerable distances relative to their wavelength. Analytical solutions to the governing partial differential equations are generally unattainable in realistic models, necessitating the utilization of numerical methods for approximated solutions to study the underlying wave phenomena. This demands the development, analysis and implementation of robust and accurate numerical techniques.

Classical dispersion analysis \cite{Hagstrom2012,Kreiss1972} demonstrated that high-order accurate numerical methods solve wave propagation problems more efficiently compared to low-order methods when solutions have sufficient regularity. Moreover, ensuring the stability of numerical methods is crucial to prevent any undesirable numerical artifacts. Over time, numerous computational techniques have been developed to address the challenges inherent in wave propagation problems.

In the finite difference framework, while constructing high-order accurate finite difference stencils using Taylor series is straightforward, maintaining stability for initial-boundary-value problems was challenging. However, this challenge has been largely addressed through the utilization of finite difference stencils possessing a summation-by-parts (SBP) property \cite{Kreiss1974}. The SBP property effectively emulates the integration-by-parts principle discretely. When boundary conditions are appropriately imposed, a discrete energy estimate analogous to the continuous energy estimate of the governing equation can be derived, thereby ensuring stability. Techniques for properly imposing boundary conditions include  the simultaneous-approximation-term (SAT) method \cite{Carpenter1994}, the projection method \cite{Mattsson2018} and the ghost point method \cite{Sjogreen2012}. In the past two decades, SBP finite difference methods have been extensively developed for wave propagation problems. For the acoustic wave equation, the SAT method was developed to impose boundary conditions \cite{Mattsson2009} and material interface conditions \cite{Mattsson2008}, and then extended to domains with curved boundaries and interfaces \cite{Virta2014}. The methodology was further developed for discretizations with different mesh sizes across interfaces \cite{Almquist2019,Wang2018,Wang2016}. For the elastic wave equation, see  \cite{Almquist2021,Duru2014V,Virta2015}, and also the two review articles \cite{Del2014Review,Svard2014}.

Numerical methods for the diffusive viscous wave equation have also been developed, for example the finite volume method \cite{Mensah2019}, the second order accurate finite difference method \cite{Zhao2014,Zhao2014b}, and more recently a local discontinuous Galerkin (LDG) method \cite{Ling2023}. In the LDG method, high-order accuracy is achieved through the utilization of high-order local basis functions. To bridge this advancement in the finite difference framework, we develop a stable and high-order accurate SBP-SAT finite difference method for the diffusive viscous wave equation with Dirichlet or Neumann boundary conditions.  Stability of the developed method is rigorously established through a derivation of a discrete energy estimate. Additionally, we derive a priori error estimates by employing normal mode analysis for problems with constant coefficients, and the energy method for problems with variable coefficients. 

The remainder of this paper is structured as follows. In Sec. 2, we introduce the governing equation, and derive a continuous energy estimate. Sec. 3 begins with a presentation of the SBP concepts, followed by the construction of the  semidiscretization by using the SBP-SAT approach. Moreover, we prove stability of the semidiscretization by deriving a discrete energy estimate. A priori error estimates are then established in Sec. 4. We present numerical examples in Sec. 5 and draw conclusion in Sec. 6.

\section{The governing equation}
The diffusive viscous wave equation in two space dimension is 
\begin{equation}\label{DVWE}
    u_{tt} + \alpha u_t - (\nabla\cdot(\beta^2\nabla u))_t - \nabla\cdot (\g^2\nabla u) = f,\quad (x,y)\in\Omega, \quad t\in (0,T].
\end{equation}
The material properties are described by the diffusive attenuation parameter $\al\geq 0$, the viscous attenuation parameter $\beta\geq 0$, and the wave speed $\gamma> 0$ in non-dispersive media. All three parameters may vary smoothly in space. Equation \eqref{DVWE} is equipped with initial conditions 
\[
u(x,y,0) = g_1(x,y),\quad u_t(x,y,0) = g_2(x,y).
\]
In addition, suitable boundary conditions must be provided so that the model problem is wellposed. \rev{Wellposedness of the diffusive viscous wave equation was thoroughly analyzed in \cite{Han2020}. Below, we follow the procedure and use the energy method to analyze stability for different boundary conditions.} 

We multiply equation \eqref{DVWE} by $u_t$, and integrate in $\Omega$,
\begin{align*}
    \int_{\Omega} u_ t f &= \int_{\Omega} u_t u_{tt} + \int_{\Omega} u_t \alpha u_t - \int_{\Omega} u_t (\nabla\cdot(\beta^2\nabla u))_t -
 \int_{\Omega} u_t \nabla\cdot (\g^2\nabla u) \\
 =&\frac{1}{2}\frac{d}{dt} \|u_t\|^2_{L^2(\Omega)} +  \|\sqrt{\alpha}u_t\|^2_{L^2(\Omega)} + \int_{\Omega} \nabla u_t \cdot \beta^2\nabla u_t - \int_{\partial\Omega} u_t \beta^2 \nabla u_t \cdot \mb{n} \\
 &+ \int_{\Omega} \nabla u_t \cdot \gamma^2\nabla u - \int_{\partial\Omega} u_t \gamma^2\nabla u  \cdot \mb{n} \\
 =&\frac{1}{2}\frac{d}{dt} \|u_t\|^2_{L^2(\Omega)} + \|\sqrt{\alpha}u_t\|^2_{L^2(\Omega)} +   \|{\beta}\nabla u_t\|^2_{L^2(\Omega)} + \frac{1}{2}\frac{d}{dt} \|\gamma \nabla u\|^2_{L^2(\Omega)} \\
 &-\int_{\partial\Omega}u_t (\beta^2 \nabla u_t \cdot \mb{n} + \gamma^2 \nabla u \cdot \mb{n}). 
\end{align*}
We have 
\begin{align}
&\frac{d}{dt} \left(\frac{1}{2}\|u_t\|^2_{L^2(\Omega)} + \frac{1}{2}\|\gamma \nabla u\|^2_{L^2(\Omega)}\right) \label{ContinuousEnergy1D}\\
=& -\|\sqrt{\alpha}u_t\|^2_{L^2(\Omega)} -\|{\beta}\nabla u_t\|^2_{L^2(\Omega)} + \int_{\partial\Omega}u_t (\beta^2 \nabla u_t \cdot \mb{n} + \gamma^2 \nabla u \cdot \mb{n}) + \int_{\Omega} u_ t f.  \notag
\end{align}

The left-hand side is the energy change rate. On the right-hand side, the first two terms correspond to diffusive and viscous attenuation that dissipates energy. The third term is the boundary contribution, and it vanishes with homogeneous Dirichlet boundary condition
\begin{equation}\label{DirichletBC}
    u=0,\quad (x,y)\in\partial\Omega,
\end{equation}
or homogeneous Neumann boundary condition, 
\begin{equation}\label{NeumannBC}
    \nabla u\cdot \mb{n}=0,\quad  (x,y)\in\partial\Omega.
\end{equation}
Therefore, both boundary conditions lead to a wellposed problem. In the special case when $\alpha=\beta=0$, the governing equation becomes the standard acoustic wave equation, and conserves energy when the forcing $f$ vanishes. 

\section{The SBP finite difference method}
\subsection{The SBP finite difference operators}\label{sec_SBP_1D}
Consider a uniform grid $\mb{x}=[x_1, x_2, \cdots, x_n]^T$ in one dimensional domain $[0,1]$, with grid points $x_j$ and grid spacing $h$ defined as
\begin{equation}\label{grid1D}
    x_j = (j-1)h,\quad j=1,2,\cdots,n,\quad h = \frac{1}{n-1}. 
\end{equation}
We define $u_j :=u(x_j)$, $v_j :=v(x_j)$,  where $u(x),v(x)$ are real-valued functions in $L^2([0,1])$. Let
\[
\mb{u}=[u_1,u_2,\cdots,u_n]^T, \quad \mb{v}=[v_1,v_2,\cdots,v_n]^T,
\]
denote the corresponding real-valued grid functions on $\mb{x}$.

The SBP concept was first introduced by Kreiss and Scherer in \cite{Kreiss1974}, which is based on a weighted inner product 
\begin{equation}\label{Hinner}
(\mb{u}, \mb{v})_H = h\sum_{i=1}^n \omega_i u_i v_i,    
\end{equation}
where all weights $\omega_i$ are positive and do not depend on $h$. In matrix-vector notation, we write $(\mb{u}, \mb{v})_H=\mb{u}^T H \mb{v}$, where $H$ is a diagonal matrix with entries $H_{ii}=h \omega_i>0$. The inner product \eqref{Hinner} defines the induced SBP norm $\|\cdot\|_H$. The weights $\omega_i=1$ in the interior of the domain, and $\omega_i\neq 1$ on a few grid points near each boundary,
\begin{align*}
&w_i=1,\quad i \in \mathcal{N}_I=\{k+1, \cdots, n-k\},\\
&w_i\neq 1, \quad i \in \mathcal{N}_B=\{1,\cdots,k\}\cup\{n-k+1,\cdots, n\}.
\end{align*}
The value $k$ depends on $p$ but not $n$.

In \cite{Kreiss1974}, the SBP operator for the approximation of the first derivative, $D_1\approx d/dx$, was constructed. The operator $D_1$ satisfies the following SBP identity.
\begin{definition}[first derivative SBP identity]
The first derivative SBP finite difference operator $D_1$ satisfies 
\begin{equation}\label{SBPidentityD1}
(\mb{u}, D_1\mb{v})_H = -(D_1 \mb{u}, \mb{v})_H - u_1v_1 + u_nv_n,    
\end{equation}
for all grid functions $\mb{u},\mb{v}$. 
\end{definition}
The SBP norm $H$ is also a quadrature \cite{Hicken2016}. Then, \eqref{SBPidentityD1} is a discrete analogue of the integration-by-parts formula, 
\[
\int_0^1 u v_x dx = - \int_0^1 u_x v dx - u(0)v(0) + u(1)v(1). 
\]

Central finite difference stencils are used in $D_1$ on the grid points in the interior $\mathcal{N}_I$, where the weights in the SBP norm equal to one. On the first $k$ grid points near each boundary, special one-sided boundary closures are employed so that the SBP identity \eqref{SBPidentityD1} holds, and this limits the order of accuracy. If the central finite difference stencils in the interior have truncation error $\mathcal{O}(h^{2p})$, then the truncation error of the boundary closures can at best be $\mathcal{O}(h^{p})$. Such SBP operators are constructed in \cite{Kreiss1974} for $p=1,2,3,4$ with $k=p$. We note that the number of grid points with boundary closure, $k$, depends on the order of accuracy $p$ but not the total number of grid points $n$. In \cite{Mattsson2014}, optimized SBP operators were constructed based on diagonal norms. The truncation error of the boundary closure remains to be $\mathcal{O}(h^{p})$, but the coefficient of the leading order term is significantly smaller than traditional SBP operators, resulting in improved accuracy. We also note that the truncation error of the boundary closures can be $\mathcal{O}(h^{2p-1})$ by using a non-diagonal SBP norm. For problems with variable coefficients, the non-diagonal SBP norm matrix does not commute with the coefficient matrix, making it difficult to establish stability estimate for the discretization \cite{Mattsson2013}. Therefore, SBP operators based on non-diagonal norm are not widely used.  

SBP operators for the second derivative, $D_2^{(b)}\approx \frac{d}{dx}\left(b(x)\frac{d}{dx}\right)$, were constructed in \cite{Mattsson2012}. The variable coefficient $b(x)\geq 0$ models material property.  
The operator $D_2^{(b)}$ satisfies the following SBP identity.
\begin{definition}[second derivative SBP identity]
The second derivative SBP finite difference operator $D_2^{(b)}$ satisfies 
\begin{equation}\label{SBPidentityD2}
(\mb{u}, D_2^{(b)}\mb{v})_H = -(\mb{u}, \mb{v})_{A^{(b)}} - b_1 u_1 \mb{d}_\mb{1}^T \mb{v} + b_n u_n \mb{d}_\mb{n}^T \mb{v},    
\end{equation}
for all grid functions $\mb{u},\mb{v}$. Here, the matrix $A^{(b)}$ is symmetric positive semidefinite, the boundary difference formula $\mb{d}_\mb{1}^T$ and $\mb{d}_\mb{n}^T$ approximate the first derivative at $x_1$ and $x_n$, respectively. The coefficients on the boundaries are $b_1=b(x_1)$ and $b_n=b(x_n)$.  
\end{definition}
The SBP identity \eqref{SBPidentityD2} is a discrete analogue of the integration-by-parts formula,
\[
\int_0^1 u (bv_x)_x dx = -\int_0^1 b u_x v_x dx - b(0)u(0)v_x(0) + b(1)u(1)v_x(1).
\]
The accuracy property of $D_2^{(b)}$ is similar to $D_1$, that is, the truncation error is $\mathcal{O}(h^{2p})$ on the grid points in the interior, and $\mathcal{O}(h^{p})$ on the first $k$ grid points near each boundary, for $p=1,2,3$. \rev{The boundary derivative approximations $\mb{d}_\mb{1}^T$ and $\mb{d}_\mb{n}^T$ have truncation error $\mathcal{O}(h^{p+1})$, so they are different from the stencils in $D_1$ on the boundary. } 

\rev{The SBP operators $D^{(b)}_2$ constructed in \cite{Mattsson2012} are compatible with $D_1$, meaning that $A^{(b)}$ can be decomposed as
\begin{equation}\label{Ab}
    A^{(b)} = D_1^T H \Lambda_b D_1 + R^{(b)},
\end{equation}
where $\Lambda_b$ is a diagonal matrix with $(\Lambda_b)_{ii} = b(x_i)$, and $R^{(b)}$ is symmetric positive semidefinite.  

By replacing the boundary derivative approximations $\mb{d}_\mb{1}^T$ and $\mb{d}_{\mb{n}}^T$ with the stencils in $D_1$, we obtain the fully-compatible \cite{MattssonParisi2010} second derivative SBP operator $\hat{D}_2^{(b)}$ that satisfies
\begin{equation}\label{SBPidentityFC}
(\mb{u}, \hat{D}_2^{(b)}\mb{v})_H = -(\mb{u}, \mb{v})_{A^{(b)}} - b_1 u_1 \hat{\mb{d}}_{\mb{1}}^{T} \mb{v} + b_n u_n \hat{\mb{d}}_{\mb{n}}^{T} \mb{v}.    
\end{equation}
In \eqref{SBPidentityFC}, the boundary derivative approximations $\hat{\mb{d}}_{\mb{1}}^{\mb{T}}$ and $\hat{\mb{d}}_{\mb{n}}^{\mb{T}}$ are exactly the same as the stencils in $D_1$ on the left and right boundary, respectively. Comparing with the original SBP operator $D_2^{(b)}$, using the fully-compatible version $\hat{D}_2^{(b)}$ simplifies significantly stability analysis of certain problems \cite{Duru2014V,MattssonParisi2010,Almquist2020,Duru2022}. The drawback of using $\hat{D}_2^{(b)}$ is that its truncation error is $\mathcal{O}(h^{p-1})$ on the boundary instead of $\mathcal{O}(h^{p})$ of $D_2^{(b)}$.  
}

In \cite{Virta2014}, it was shown that operator $A^{(b)}$ can be expressed as 
\begin{equation}\label{estimateA}
\mb{v}^T A^{(b)} \mb{v} = h\bp b_{l, \min} (\dl^T \mb{v})^2 + h\bp b_{r, \min} (\dr^T \mb{v})^2+\mb{v}^T \tilde A^{(b)} \mb{v}
\end{equation}
where $b_{l, \min}$ and $b_{r, \min}$ are the smallest of $b(x)$ evaluated on the first and last $m$ grid points near each boundary,
\[b_{l, \min} = \min(b(x_1), \cdots,b(x_m)), \quad b_{r, \min} = \min(b(x_{n-m+1}),\cdots,b(x_n)).\]
The parameter $\bp>0$ is chosen as large as possible when $\tilde A^{(b)}$ is symmetric positive semidefinite. As an example, when $p=2$ we have $m=4$ and $\theta=0.2505765857$, see \cite{Virta2014}. In addition, $A^{(b)}$ has exactly one zero eigenvalue \cite{Eriksson2021}. We note that $A^{(b)}$ is an analogue of the stiffness matrix in a finite element discretization, $\mb{v}^T A^{(b)} \mb{v} \approx \int_{x_1}^{x_n} v_x^2 dx$ and \eqref{estimateA} is an analogue of the inverse inequality because 
\begin{equation}\label{InverseInequality}
\mb{v}^T A^{(b)} \mb{v} \geq h\bp (b_{l, \min} (\dl^T \mb{v})^2 + b_{r, \min} (\dr^T \mb{v})^2).
\end{equation}
\rev{The right-hand side of \eqref{InverseInequality} is used to derive stability analysis for the SBP-SAT discretization of certain problems, e.g., the wave equation with Dirichlet boundary condition and interface conditions, and the diffusive viscous wave equation. }

\rev{When the variable coefficient $b$ takes value zero on some grid points close to the boundary but not zero on the boundary, then the right-hand side of \eqref{InverseInequality} becomes zero and the aforementioned stability analysis fails. In this case, the fully-compatible SBP operator $\hat{D}_2^{(b)}$ can be used in the discretization to establish stability analysis, because a similar estimate to \eqref{InverseInequality} holds
\begin{equation}\label{InverseInequality2}
\mb{v}^T A^{(b)} \mb{v} \geq h\omega_1 (b_{1} (\hat{\mb{d}}_{\mb{1}}^{T} \mb{v})^2 + b_{n} (\hat{\mb{d}}_{\mb{n}}^{{T}} \mb{v})^2),
\end{equation}
where $\omega_1$ is the first weight in the SBP inner product \eqref{Hinner}. 
}

In the literature, it is common to refer to the accuracy of the SBP operators by its interior truncation error, i.e., $2p^{th}$ order accurate. We use this convention in this paper, and make the convergence rate of the overall discretization precise. 

Boundary conditions are not built into the SBP operators, and can be imposed either strongly by using the projection method \cite{Mattsson2018} or weakly by the SAT method \cite{Carpenter1994}. In this work, we take the latter approach. We remark that a different second derivative SBP operator was constructed in \cite{Sjogreen2012} by using ghost points. 

\subsection{An SBP-SAT discretization for the Dirichlet problem with viscosity}
Consider the governing equation \eqref{DVWE} \rev{with $\beta>0$} in a bounded one space dimensional domain $\Omega=[0,1]$ with Dirichlet boundary condition \eqref{DirichletBC}. We discretize the equation in space on the grid defined in \eqref{grid1D}, and denote the numerical solution $v_j(t)\approx u(x_j, t)$ and $\mb{v}=[v_1, v_2, \cdots, v_n]^T$. Then, the semidiscretization reads 
\begin{equation}\label{semiD1D}
    \mb v_{tt} + \Lambda_{\alpha}\mb{v}_t - D_{xx}^{(\beta^2)}\mb{v}_t - D_{xx}^{(\gamma^2)}\mb{v} + \FD = \mb{f} ,
\end{equation}
where $\Lambda_{\alpha}$ is a diagonal matrix with $(\Lambda_{\alpha})_{jj}=\alpha(x_j)$, and the term $\FD$ imposes weakly the Dirichlet boundary condition in such a way that a discrete energy estimate can be derived.  

In the following, we analyze the stability of \eqref{semiD1D} and construct $\FD$ so that a discrete energy estimate can be obtained. The forcing term $\mb{f}$ does not affect stability, and we consider zero forcing for simplified notation. We multiply equation \eqref{semiD1D} by $\mb{v}_t^T H$, and use the SBP identity \eqref{SBPidentityD2} to obtain 
\begin{align*}
    0 =& \mb{v}_t^T H \mb v_{tt} + \mb{v}_t^T H \Lambda_{\alpha}\mb{v}_t - \mb{v}_t^T H D_{xx}^{(\beta^2)}\mb{v}_t - \mb{v}_t^T H D_{xx}^{(\gamma^2)}\mb{v} + \mb{v}_t^T H \FD\\
    =&\frac{1}{2}\frac{d}{dt}\|\mb{v}_t\|_H^2 + \|\mb{v}_t\|^2_{H\Lambda_{\alpha}} - \mb{v}_t^T (-A^{(\beta^2)} -\beta_1^2 \mb{e_1}\dl^T + \beta_n^2 \mb{e_n}\dr^T  ) \mb{v}_t \\
    &-\mb{v}_t^T (-A^{(\gamma^2)} -\gamma_1^2 \mb{e_1}\dl^T + \gamma_n^2 \mb{e_n}\dr^T) \mb{v} + \mb{v}_t^T H \FD,
\end{align*}
where $\mb{e}_1 = [1,0,\cdots,0]^T$ and $\mb{e}_n = [0,\cdots,0,1]^T$. Rearranging terms yields
\begin{align}
&\frac{d}{dt}\left(\frac{1}{2}\|\mb{v}_t\|_H^2+\frac{1}{2}\|\mb{v}\|_{A^{(\gamma^2)}}^2\right) \label{DiscreteEnergy1D}\\
=& - \|\mb{v}_t\|_{H\Lambda_{\alpha}}^2 - \|\mb{v}_t\|_{A^{(\beta^2)}}^2 + \mb{v}_t^T \left(-\beta_1^2 \mb{e_1}\dl^T + \beta_n^2 \mb{e_n}\dr^T \right)\mb{v}_t\notag\\ 
&+ \mb{v}_t^T \left(-\gamma_1^2 \mb{e_1}\dl^T + \gamma_n^2 \mb{e_n}\dr^T \right)\mb{v}  -  \mb{v}_t^T H \FD.\notag
\end{align}
\rev{We construct $\FD$ so that $\FD$ is consistent with the boundary condition, and \eqref{DiscreteEnergy1D} is a discrete analogue of the continuous energy estimate \eqref{ContinuousEnergy1D}, i.e., the discrete energy change rate is nonpositive. To achieve this, we first note that on the right-hand side of \eqref{DiscreteEnergy1D}, though the first two terms are nonpositive, the third and the fourth terms are not. In addition, the boundary difference operators in third and the fourth terms make them nonsymmetric, and these two terms should be symmetrized by $\FD$. }

To this end, we make the ansatz
\begin{align}
\FD =& -H^{-1} (-\beta_1^2 \mb{e_1}\dl^T + \beta_n^2 \mb{e_n}\dr^T)^T \mb{v}_t +  H^{-1} \left(\frac{\tau_1}{h}\mb{e}_1\mb{e}_1^T + \frac{\tau_2}{h} \mb{e}_n\mb{e}_n^T \right)\mb{v}_t\notag \\
    & -H^{-1} (-\gamma_1^2 \mb{e_1}\dl^T + \gamma_n^2 \mb{e_n}\dr^T)^T \mb{v} + H^{-1} \left(\frac{\tau_3}{h}\mb{e}_1\mb{e}_1^T + \frac{\tau_4}{h} \mb{e}_n\mb{e}_n^T \right)\mb{v}. \label{FD}
\end{align}
\rev{On the right-hand side of \eqref{FD}, the first and third term symmetrize the third and fourth term in \eqref{DiscreteEnergy1D}, respectively. The second and fourth terms in \eqref{FD} are penalty terms, with penalty parameters $\tau_1,\tau_2,\tau_3,\tau_4$ to be determined so that a discrete energy estimate can be obtained. For consistency, we have $\FD\approx\mathbf{0}$ because $\mb{v}\approx 0,\mb{v}_t\approx 0$ on the boundary with the homogeneous Dirichlet boundary condition. The generalization to inhomogeneous Dirichlet boundary condition $u=g$ is straightforward, as we penalize $\mb{v}-\mb{g}$ and $\mb{v}_t - \mb{g}_t$ in \eqref{FD} instead, and $\FD\approx\mathbf{0}$ still holds. We remark that the factor $1/h$ in the penalty terms makes sure that each term in $\FD$ scales with $h$ to the same order $h^{-2}$ as the second derivative approximation. }

Substituting the ansatz of $\FD$ to \eqref{DiscreteEnergy1D} and using \eqref{estimateA}, we obtain 
\begin{align}
&\frac{d}{dt}\left(\frac{1}{2}\|\mb{v}_t\|_H^2+\frac{1}{2}\|\mb{v}\|_{A^{(\gamma^2)}}^2\right) \notag\\
=& - \|\mb{v}_t\|_{H\Lambda_{\alpha}}^2 - \|\mb{v}_t\|_{\tilde A^{(\beta^2)}}^2 \label{EA1D_s1}\\
&-h\theta\beta_{l,\min}^2 (\dl^T\mb{v}_t)^2  -2\beta_1^2 \mb{v}_t^T\mb{e_1}\dl^T\mb{v}_t - \frac{\tau_1}{h} \mb{v}_t^T\mb{e_1}\mb{e}_\mb{1}^T\mb{v}_t \label{EA1D_s2}\\
&- h\theta\beta_{r,\min}^2 (\dr^T\mb{v}_t)^2 + 2\beta_n^2 \mb{v}_t^T\mb{e_n}\dr^T\mb{v}_t - \frac{\tau_2}{h} \mb{v}_t^T\mb{e_n}\mb{e}_\mb{n}^T\mb{v}_t \label{EA1D_s3}\\ 
&-\gamma_1^2 \mb{v}_t^T \mb{e_1}\dl^T\mb{v} - \gamma_1^2 \mb{v}_t^T  (\mb{e_1}\dl^T)^T \mb{v} -\frac{\tau_3}{h}\mb{v}_t^T \mb{e}_1\mb{e}_1^T \mb{v}  \label{EA1D_s4}\\
&+ \gamma_n^2\mb{v}_t^T \mb{e_n}\dr^T\mb{v}  + \gamma_n^2 \mb{v}_t^T (\mb{e_n}\dr^T)^T \mb{v}-\frac{\tau_4}{h}\mb{v}_t^T \mb{e}_n\mb{e}_n^T \mb{v} \label{EA1D_s5}
\end{align}
It is obvious that the terms in \eqref{EA1D_s1} are nonpositive. For  \eqref{EA1D_s2}, we write 
\begin{align*}
&-h\theta\beta_{l,\min}^2 (\dl^T\mb{v}_t)^2  -2\beta_1^2 \mb{v}_t^T\mb{e_1}\dl^T\mb{v}_t - \frac{\tau_1}{h} \mb{v}_t^T\mb{e_1}\mb{e}_\mb{1}^T\mb{v}_t \\
=& -\begin{bmatrix}
     \mb{e}_{\mb{1}}^T \mb{v}_t\\
    \dl^T \mb{v}_t
\end{bmatrix}^T
\begin{bmatrix}
    \frac{\tau_1}{h} & \beta_1^2\\
    \beta_1^2 & h\theta\beta_{l,\min}^2
\end{bmatrix}
\begin{bmatrix}
     \mb{e}_{\mb{1}}^T \mb{v}_t\\
    \dl^T \mb{v}_t
\end{bmatrix}.
\end{align*}
If
\begin{equation}\label{tau1}
\left(\frac{\tau_1}{h}\right)(h\theta\beta_{l,\min}^2)-(\beta_1^2)(\beta_1^2)\geq 0 \Rightarrow \tau_1\geq \frac{\beta_1^4}{\theta\beta_{l,\min}^2}, 
\end{equation}
then the 2-by-2 matrix is symmetric positive semidefinite and \eqref{EA1D_s2} is nonpositive.  Similarly, \eqref{EA1D_s3} is nonpositive if 
\begin{equation}\label{tau2}
 \tau_2\geq \frac{\beta_n^4}{\theta\beta_{r,\min}^2}. 
\end{equation}

The terms in \eqref{EA1D_s4}-\eqref{EA1D_s5} contain both $\mb{v}$ and $\mb{v}_t$. Therefore, we need to write them as time derivatives included in the discrete energy on the left-hand side. We have 
\begin{align}
\frac{d}{dt} \vertiii{\mb{v}}^2_{h,D} =& - \|\mb{v}_t\|_{H\Lambda_{\alpha}}^2 - \|\mb{v}_t\|_{\tilde A^{(\beta^2)}}^2 -\begin{bmatrix}
     \mb{e}_{\mb{1}}^T \mb{v}_t\\
    \dl^T \mb{v}_t
\end{bmatrix}^T
\begin{bmatrix}
    \frac{\tau_1}{h} & \beta_1^2\\
    \beta_1^2 & h\theta\beta_{l,\min}^2
\end{bmatrix}
\begin{bmatrix}
     \mb{e}_{\mb{1}}^T \mb{v}_t\\
    \dl^T \mb{v}_t
\end{bmatrix}\notag \\ 
&-\begin{bmatrix}
     \mb{e}_{\mb{n}}^T \mb{v}_t\\
    \dr^T \mb{v}_t
\end{bmatrix}^T
\begin{bmatrix}
    \frac{\tau_2}{h} & \beta_n^2\\
    \beta_n^2 & h\theta\beta_{r,\min}^2
\end{bmatrix}
\begin{bmatrix}
     \mb{e}_{\mb{n}}^T \mb{v}_t\\
    \dr^T \mb{v}_t
\end{bmatrix}\leq 0,\label{EA1D_s6} 
\end{align}
where the discrete energy $\vertiii{\mb{v}}^2_{h,D}$ is 
\[
\frac{1}{2}\|\mb{v}_t\|_H^2+\frac{1}{2}\|\mb{v}\|_{A^{(\gamma^2)}}^2 +\gamma_1^2 \mb{v}^T \mb{e_1}\dl^T\mb{v} + \frac{\tau_3}{2h}\mb{v}^T \mb{e_1} \mb{e}_{\mb{1}}^T\mb{v} - \gamma_n^2 \mb{v}^T \mb{e_n}\dr^T\mb{v} + \frac{\tau_4}{2h}\mb{v}^T \mb{e_n} \mb{e}_{\mb{n}}^T\mb{v}.
\]
We note that the last four terms are approximately zero because of the homogeneous Dirichlet boundary condition. Thus, the discrete energy is an analogue of the continuous energy. 

We have shown that the right-hand side of \eqref{EA1D_s6} is nonpositive. Now we show that by choosing appropriate values of $\tau_3,\tau_4$, we have $\vertiii{\mb{v}}^2_{h,D}\geq 0$ so that it is indeed a discrete energy. We use \eqref{estimateA} to rewrite the term $\|\mb{v}\|_{A^{(\gamma^2)}}^2$, and obtain 
\begin{align*}
    \vertiii{\mb{v}}^2_{h,D}=&\frac{1}{2}\|\mb{v}_t\|_H^2+\frac{1}{2}\|\mb{v}\|_{\tilde A^{(\gamma^2)}}^2 +\frac{1}{2}h\bp \gamma^2_{l, \min} (\dl^T \mb{v})^2 + \frac{1}{2}h\bp \gamma^2_{r, \min} (\dr^T \mb{v})^2\\
    &+\gamma_1^2 \mb{v}^T \mb{e_1}\dl^T\mb{v} + \frac{\tau_3}{2h}\mb{v}^T \mb{e_1} \mb{e}_{\mb{1}}^T\mb{v} - \gamma_n^2 \mb{v}^T \mb{e_n}\dr^T\mb{v} + \frac{\tau_4}{2h}\mb{v}^T \mb{e_n} \mb{e}_{\mb{n}}^T\mb{v}\\
    =&\frac{1}{2}\|\mb{v}_t\|_H^2+\frac{1}{2}\|\mb{v}\|_{\tilde A^{(\gamma^2)}}^2 \\
&+\begin{bmatrix}
     \mb{e}_{\mb{1}}^T \mb{v}\\
    \dl^T \mb{v}
\end{bmatrix}^T
\begin{bmatrix}
    \frac{\tau_3}{h} & \gamma_1^2\\
    \gamma_1^2 & h\theta\gamma_{l,\min}^2
\end{bmatrix}
\begin{bmatrix}
     \mb{e}_{\mb{1}}^T \mb{v}\\
    \dl^T \mb{v}
\end{bmatrix}\notag\\
&+\begin{bmatrix}
     \mb{e}_{\mb{n}}^T \mb{v}\\
    \dr^T \mb{v}
\end{bmatrix}^T
\begin{bmatrix}
    \frac{\tau_4}{h} & \gamma_n^2\\
    \gamma_n^2 & h\theta\gamma_{r,\min}^2
\end{bmatrix}
\begin{bmatrix}
     \mb{e}_{\mb{n}}^T \mb{v}\\
    \dr^T \mb{v}
\end{bmatrix}.
\end{align*}
Therefore, we have $\vertiii{\mb{v}}^2_{h,D}\geq 0$ if
\begin{equation}\label{tau34}
 \tau_3\geq \frac{\gamma_1^4}{\theta\gamma_{l,\min}^2},\quad  \tau_4\geq \frac{\gamma_n^4}{\theta\gamma_{r,\min}^2}. 
\end{equation}

\rev{We note that the discrete energy is an approximation of the continuous energy, because the first term in $\vertiii{\mb{v}}^2_{h,D}$ approximates  $\|u_t\|^2_{L^2(\Omega)}$, and the remaining terms in $\vertiii{\mb{v}}^2_{h,D}$ approximate $\|\gamma \nabla u\|^2_{L^2(\Omega)}$. The dependence of  $\vertiii{\mb{v}}^2_{h,D}$ on the penalty parameters $\tau_3,\tau_4$ is an analogue of the discrete energy of the symmetric interior penalty discontinuous Galerkin method \cite{Grote2006}.} We have now obtained a discrete energy estimate, and summarize the result in the following theorem. 

\begin{theorem}\label{thm_dis_sta}
    The semidiscretization \eqref{semiD1D} satisfies the energy estimate \eqref{EA1D_s6} if the penalty parameters are chosen as in \eqref{tau1}, \eqref{tau2} and \eqref{tau34}.
\end{theorem}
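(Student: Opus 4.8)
The plan is to run the discrete energy method in exact analogy with the continuous derivation of \eqref{ContinuousEnergy1D}. First I would multiply the semidiscretization \eqref{semiD1D} by $\mb{v}_t^T H$ and invoke the second-derivative SBP identity \eqref{SBPidentityD2} for both the viscous operator $D_{xx}^{(\be^2)}$ and the wave-speed operator $D_{xx}^{(\g^2)}$. This produces the raw energy rate \eqref{DiscreteEnergy1D}: the diffusive term yields the manifestly dissipative $-\|\mb{v}_t\|^2_{H\Lambda_\al}$, the viscous term contributes $-\|\mb{v}_t\|^2_{A^{(\be^2)}}$ together with nonsymmetric boundary-derivative terms in $\mb{v}_t$, and the $\g^2$ term gives a contribution bilinear in $\mb{v}$ and $\mb{v}_t$.

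Next I would substitute the ansatz \eqref{FD}. Its first and third SAT terms are designed precisely to symmetrize the nonsymmetric boundary-derivative contributions, while the second and fourth supply diagonal boundary penalties weighted by $\tau_1,\dots,\tau_4$. Applying the decomposition \eqref{estimateA} to both $A^{(\be^2)}$ and $A^{(\g^2)}$ extracts the interior part $\tilde A^{(\cdot)}$, which stays on the dissipative side, and exposes the boundary inverse-inequality terms $h\bp b_{\min}^2(\dl^T\mb{v})^2$. At this point the energy rate separates into the structure displayed in \eqref{EA1D_s1}--\eqref{EA1D_s5}: a block of manifestly nonpositive volume terms, two boundary blocks purely quadratic in $\mb{v}_t$, and two boundary blocks bilinear in $\mb{v}$ and $\mb{v}_t$.

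The two purely-$\mb{v}_t$ blocks I would rewrite as negative $2\times 2$ quadratic forms in the boundary data $(\mb{e}_1^T\mb{v}_t,\dl^T\mb{v}_t)$ and $(\mb{e}_n^T\mb{v}_t,\dr^T\mb{v}_t)$. Since the diagonal entries $\tau_i/h$ and $h\bp b_{\min}^2$ are nonnegative, positive semidefiniteness reduces to a nonnegative-determinant condition, which is exactly \eqref{tau1} and \eqref{tau2}. The bilinear blocks, however, cannot be signed on their own, and this is the crux of the argument. Because they originate entirely from the symmetric $\g^2$ operator, they are exact time derivatives of quadratic forms in $\mb{v}$, so I would absorb them into the energy functional, thereby defining $\vertiii{\mb{v}}^2_{h,D}$ and arriving at the inequality \eqref{EA1D_s6}.

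The main obstacle is that this augmented functional $\vertiii{\mb{v}}^2_{h,D}$ is no longer manifestly nonnegative: it carries the extra boundary penalty and cross terms weighted by $\tau_3,\tau_4$, so its positivity must be re-established before it qualifies as a genuine discrete energy. I would do this by applying \eqref{estimateA} once more to $\|\mb{v}\|^2_{A^{(\g^2)}}$, which recasts the boundary part of $\vertiii{\mb{v}}^2_{h,D}$ as the same type of $2\times 2$ quadratic form, now in $(\mb{e}_1^T\mb{v},\dl^T\mb{v})$ and $(\mb{e}_n^T\mb{v},\dr^T\mb{v})$; requiring these to be positive semidefinite gives precisely \eqref{tau34}. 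Thus the delicate point is that two separate semidefiniteness requirements must hold simultaneously, one guaranteeing a nonpositive energy rate and one guaranteeing a nonnegative energy, and both hinge on the decomposition \eqref{estimateA}, which supplies the diagonal entry $h\bp b_{\min}^2$ needed to dominate the off-diagonal boundary-derivative coupling $b_1^2$ (respectively $b_n^2$) in each block.
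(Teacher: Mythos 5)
Your proposal is correct and follows essentially the same route as the paper: multiply by $\mb{v}_t^T H$, apply the SBP identity \eqref{SBPidentityD2}, substitute the SAT ansatz \eqref{FD}, sign the $\mb{v}_t$-boundary blocks as $2\times 2$ quadratic forms via \eqref{estimateA} to get \eqref{tau1}--\eqref{tau2}, absorb the $\gamma^2$ bilinear terms into the energy $\vertiii{\mb{v}}^2_{h,D}$, and verify its nonnegativity with a second application of \eqref{estimateA} to obtain \eqref{tau34}. The only minor imprecision is your early remark that \eqref{estimateA} is applied to both $A^{(\beta^2)}$ and $A^{(\gamma^2)}$ at the rate stage, whereas the $A^{(\gamma^2)}$ decomposition is needed only when establishing positivity of the energy, as you in fact do later.
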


\rev{
\begin{remark}
    The wave speed $\gamma$ is always positive, and the penalty parameters in \eqref{tau34} are thus valid. However, the viscous attenuation parameter $\beta$ can be zero, and the scheme needs to be adjusted accordingly. If $\beta=0$ on the left boundary, i.e., $\beta_1=0$, which leads to $\beta_{l,\min}=0$, then we only need to set the corresponding penalty parameter $\tau_1$ to zero. The energy estimate follows, because all terms associated with the left boundary vanish in the discretization. Similarly, we set $\tau_2=0$ when $\beta_n=0$.

    If $\beta$ is not equal to zero on the boundary, but is zero on some grid point near the boundary, then we discretize using the fully-compatible SBP operator $\hat D_{xx}^{(\beta^2)}$,
    \begin{equation}\label{semiD1Dfc}
    \mb v_{tt} + \Lambda_{\alpha}\mb{v}_t - \hat D_{xx}^{(\beta^2)}\mb{v}_t - D_{xx}^{(\gamma^2)}\mb{v} + \hat\FD = \mb{f} ,
\end{equation}
where
\begin{align}
\hat\FD =& -H^{-1} (-\beta_1^2 \mb{e_1}\hat{\mb{d}}_{\mb{1}}^{T} + \beta_n^2 \mb{e_n}\hat{\mb{d}}_{\mb{n}}^{T})^T \mb{v}_t +  H^{-1} \left(\frac{\hat\tau_1}{h}\mb{e}_1\mb{e}_1^T + \frac{\hat\tau_2}{h} \mb{e}_n\mb{e}_n^T \right)\mb{v}_t\notag \\
    & -H^{-1} (-\gamma_1^2 \mb{e_1}\dl^T + \gamma_n^2 \mb{e_n}\dr^T)^T \mb{v} + H^{-1} \left(\frac{\tau_3}{h}\mb{e}_1\mb{e}_1^T + \frac{\tau_4}{h} \mb{e}_n\mb{e}_n^T \right)\mb{v}. \label{FDfc}
\end{align}
Following the same procedure as the above analysis and \eqref{InverseInequality2}, the discretization \eqref{semiD1Dfc}-\eqref{FDfc} is stable with 
\[
\hat\tau_1\geq \frac{\beta_1^2}{\omega_1}, \quad \hat\tau_2\geq \frac{\beta_n^2}{\omega_1}, 
\]
and $\tau_3$ and $\tau_4$ from \eqref{tau34}. 
\end{remark}
}
    
\subsection{An SBP-SAT discretization for the Neumann problem}
We consider the governing equation in domain $\Omega=[0,1]$ with the homogeneous Neumann boundary condition \eqref{NeumannBC}, 
\begin{equation}\label{semiN1D}
    \mb v_{tt} + \Lambda_{\alpha}\mb{v}_t - D_{xx}^{(\beta^2)}\mb{v}_t - D_{xx}^{(\gamma^2)}\mb{v} + \FN = \mb{f},
\end{equation}
where $\FN$ imposes weakly the Neumann boundary condition and is determined through stability analysis. The other terms in \eqref{semiN1D} are the same as in \eqref{semiD1D}.

To prove stability, we consider zero forcing, and multiply equation \eqref{semiN1D} by $\mb{v}_t^T H$. By using the SBP identity \eqref{SBPidentityD2} and  
rearranging terms, we obtain 
\begin{align}
&\frac{d}{dt}\left(\frac{1}{2}\|\mb{v}_t\|_H^2+\frac{1}{2}\|\mb{v}\|_{A^{(\gamma^2)}}^2\right) \label{DiscreteEnergy1D_N}\\
=& - \|\mb{v}_t\|_{H\Lambda_{\alpha}}^2 - \|\mb{v}_t\|_{A^{(\beta^2)}}^2 + \mb{v}_t^T \left(-\beta_1^2 \mb{e_1}\dl^T + \beta_n^2 \mb{e_n}\dr^T \right)\mb{v}_t\notag\\ 
&+ \mb{v}_t^T \left(-\gamma_1^2 \mb{e_1}\dl^T + \gamma_n^2 \mb{e_n}\dr^T \right)\mb{v}  -  \mb{v}_t^T H \FN.\notag
\end{align}
We need to design $\FN$ such that $\FN$ is consistent with the boundary condition $u_x=0$, and we can obtain a discrete energy estimate from \eqref{DiscreteEnergy1D_N}. Since all boundary terms in \eqref{DiscreteEnergy1D_N} are the approximations of the derivative at the boundary, it is straightforward to use $\FN$ to cancel all boundary terms. To this end, we have
\begin{align}
\FN = H^{-1} (-\beta_1^2 \mb{e_1}\dl^T + \beta_n^2 \mb{e_n}\dr^T) \mb{v}_t + H^{-1} (-\gamma_1^2 \mb{e_1}\dl^T + \gamma_n^2 \mb{e_n}\dr^T) \mb{v}. \label{FN}
\end{align}
Unlike the Dirichlet boundary condition, there is no penalty parameter in \eqref{FN}. Substituting the expression \eqref{FN} to \eqref{DiscreteEnergy1D_N}, we obtain the following discrete energy estimate
\begin{align}\label{DiscreteEnergy1D_N2}
\frac{d}{dt}\left(\frac{1}{2}\|\mb{v}_t\|_H^2+\frac{1}{2}\|\mb{v}\|_{A^{(\gamma^2)}}^2\right) =  - \|\mb{v}_t\|_{H\Lambda_{\alpha}}^2 - \|\mb{v}_t\|_{A^{(\beta^2)}}^2 \leq 0.
\end{align}
We summarize the stability result in the following theorem. 
\begin{theorem}\label{thm_dis_sta_N}
    The semidiscretization \eqref{semiN1D} with \eqref{FN} satisfies the energy estimate \eqref{DiscreteEnergy1D_N2}.
\end{theorem}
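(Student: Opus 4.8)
The plan is to start from the energy identity \eqref{DiscreteEnergy1D_N}, which has already been obtained by taking $\mb{f}=\mb{0}$ in \eqref{semiN1D}, multiplying from the left by $\mb{v}_t^T H$, and invoking the second-derivative SBP identity \eqref{SBPidentityD2}. The only remaining task is to evaluate the penalty contribution $\mb{v}_t^T H \FN$ for the specific choice \eqref{FN} and to verify that it cancels the two indefinite boundary terms on the right-hand side of \eqref{DiscreteEnergy1D_N}.

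First I would substitute \eqref{FN} directly. Since $H H^{-1} = I$, the weight matrix drops out and I obtain
\[
\mb{v}_t^T H \FN = \mb{v}_t^T\left(-\beta_1^2 \mb{e_1}\dl^T + \beta_n^2 \mb{e_n}\dr^T\right)\mb{v}_t + \mb{v}_t^T\left(-\gamma_1^2 \mb{e_1}\dl^T + \gamma_n^2 \mb{e_n}\dr^T\right)\mb{v}.
\]
This is precisely the sum of the third and fourth terms on the right-hand side of \eqref{DiscreteEnergy1D_N}, so $-\mb{v}_t^T H \FN$ cancels them identically and leaves
\[
\frac{d}{dt}\left(\frac{1}{2}\|\mb{v}_t\|_H^2+\frac{1}{2}\|\mb{v}\|_{A^{(\gamma^2)}}^2\right) = -\|\mb{v}_t\|_{H\Lambda_{\alpha}}^2 - \|\mb{v}_t\|_{A^{(\beta^2)}}^2.
\]
To reach \eqref{DiscreteEnergy1D_N2} I would then argue that both remaining terms are nonpositive: $H\Lambda_{\alpha}$ is a nonnegative diagonal matrix because $H_{ii}>0$ and $\alpha\geq 0$, while $A^{(\beta^2)}$ is symmetric positive semidefinite by the SBP identity \eqref{SBPidentityD2}. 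Hence the discrete energy change rate is nonpositive.

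In contrast to the Dirichlet case, no penalty parameters need tuning and the inverse-inequality estimate \eqref{estimateA} is not invoked at all. The reason is structural: for homogeneous Neumann data the continuous boundary term in \eqref{ContinuousEnergy1D} vanishes because the normal derivatives vanish, and discretely every boundary term appearing in \eqref{DiscreteEnergy1D_N} is a first-derivative approximation, namely $\dl^T\mb{v}_t$, $\dr^T\mb{v}_t$, $\dl^T\mb{v}$, and $\dr^T\mb{v}$. Thus $\FN$ need only reproduce and cancel these terms, rather than symmetrize a contribution involving the boundary value $\mb{v}$ itself, which is what forced the penalty construction and the use of \eqref{estimateA} in Theorem \ref{thm_dis_sta}. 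Consequently I expect no genuine obstacle here; the single point requiring care is the bookkeeping of signs in the cancellation, which I would confirm termwise at the left and right boundaries separately before concluding \eqref{DiscreteEnergy1D_N2}.
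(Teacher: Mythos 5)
Your proposal is correct and follows essentially the same route as the paper: substitute \eqref{FN} into \eqref{DiscreteEnergy1D_N}, observe that $\mb{v}_t^T H \FN$ reproduces exactly the two indefinite boundary terms so they cancel, and conclude nonpositivity from $\alpha\geq 0$ and the positive semidefiniteness of $A^{(\beta^2)}$. Your closing observation---that all boundary terms here are first-derivative approximations, so cancellation suffices and no penalty parameters or use of \eqref{estimateA} is needed---is precisely the paper's own justification for the form of \eqref{FN}.
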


\subsection{Multidimensional problems}
We consider a rectangular shaped domain in two space dimension, $\Omega=[0,1]^2$. The one dimensional SBP operators defined in Sec. \ref{sec_SBP_1D} can be generalized to two dimension by using Kronecker product. To see this, we discretize $\Omega=[0,1]^2$ by $n$ grid points in each spatial direction, and use a column-wise ordering. For example, we denote the pointwise evaluation of a function $u(x,y)$ on the grid point $(x_i, y_j)$ as $u_{ij}$, and store in a vector 
\[
\tilde{\mb{u}} = [u_{11}, u_{12},\cdots, u_{1n}, u_{21}, u_{22},\cdots, u_{2n}, \cdots, u_{n1}, u_{n2},\cdots, u_{nn}]^T.
\]
We use the tilde-symbol for variables in two dimension. 

Let $\tilde D_{xx}^{(b)}$ be the SBP operator approximating the second derivative in the $x$ direction, $\frac{\partial}{\partial x} b(x,y)\frac{\partial}{\partial x}$, then $\tilde D_{xx}^{(b)}$ can be constructed by repeatedly using the one dimensional SBP operator on every horizontal grid line,
\begin{equation*}
    \tilde D_{xx}^{(b)} = \sum_{j=1}^n D_2^{(b(\mathbf x,y_j))}\otimes E_j,
\end{equation*}
where $D_2^{(b(\mathbf x,y_j))}$ is the one dimensional SBP operator defined on grid line $(\mathbf x,y_j)$, and $\mathbf x=[x_1, x_2,\cdots,x_n]^T$. The $n$-by-$n$ matrix $E_j$ has components zero everywhere except in column $j$ and row $j$, where the component equals to one. Similarly, the SBP operator $\tilde D_{yy}^{(b)}$ for  $\frac{\partial}{\partial y} b(x,y)\frac{\partial}{\partial y}$ can be constructed as 
\begin{equation*}
    \tilde D_{yy}^{(b)} = \sum_{j=1}^n E_j\otimes D_2^{(b(x_j, \mathbf y))},
\end{equation*}
by using the one dimensional SBP operator on every vertical grid line. We also define the operator
\begin{equation}\label{D_Delta}
    \tilde D_{\Delta}^{(b)} = \tilde D_{xx}^{(b)} + \tilde D_{yy}^{(b)},
\end{equation}
which approximates the Laplacian $\nabla\cdot b\nabla$.

The semidiscretization for the governing equation \eqref{DVWE} with homogeneous Dirichlet boundary condition \eqref{DirichletBC} \rev{and $\beta>0$} is 
\begin{equation}\label{semiD2D}
    \tilde{\mb v}_{tt} + \tilde \Lambda_{\alpha} \tilde{\mb{v}}_t - \tilde D_{\Delta}^{(\beta^2)} \tilde{\mb{v}}_t - \tilde D_{\Delta}^{(\gamma^2)} \tilde{\mb{v}} + \tFD = \tilde{\mb{f}} ,
\end{equation}
where the vector $\tilde{\mb v}$ is the finite difference solution, $\tilde \Lambda_{\alpha}$ is a diagonal matrix with the pointwise evaluation of the diffusive attenuation parameter $\alpha(x,y)$ on the grid, i.e., $(\tilde \Lambda_{\alpha})_{kk} = \alpha(x_i, y_j)$, where $k=(i-1)n+j$. The vector $\tilde{\mb{f}}$ is pointwise evaluation of the forcing function on the grid. The SBP operators $\tilde D_{\Delta}^{(\beta^2)}$ and $\tilde D_{\Delta}^{(\gamma^2)}$ are constructed according to \eqref{D_Delta}. The term $\tFD$ imposes weakly the boundary condition, and can also be constructed by using its one dimensional counterpart \eqref{FD}. More precisely, we have 
\[
\tFD = \tFDx + \tFDy,
\]
where $\tFDx$ and $\tFDy$ impose boundary conditions in the $x$-direction and $y$-direction, respectively. They take the form
\begin{align}
\tFDx =& -\tilde H^{-1}_x \left(-\sum_{j=1}^n \beta_{1j}^2 \mb{e_1}\dl^T \otimes E_j + \sum_{j=1}^n \beta_{nj}^2 \mb{e}_n\mb{d}_n^T \otimes E_j \right)^T \tilde{\mb{v}}_t \notag \\
&+ \tilde H^{-1}_x \left(\frac{1}{h} \sum_{j=1}^n \tau_{1j} \mb{e}_1\mb{e}_1^T\otimes E_j + \frac{1}{h} \sum_{j=1}^n \tau_{2j} \mb{e}_n\mb{e}_n^T\otimes E_j \right)\tilde{\mb{v}}_t \notag  \\
& -\tilde H^{-1}_x \left(-\sum_{j=1}^n \gamma_{1j}^2 \mb{e_1}\dl^T \otimes E_j + \sum_{j=1}^n \gamma_{nj}^2 \mb{e}_n\mb{d}_n^T \otimes E_j \right)^T \tilde{\mb{v}} \notag \\
&+ \tilde H^{-1}_x \left(\frac{1}{h} \sum_{j=1}^n \tau_{3j} \mb{e}_1\mb{e}_1^T\otimes E_j + \frac{1}{h} \sum_{j=1}^n \tau_{4j} \mb{e}_n\mb{e}_n^T\otimes E_j \right)\tilde{\mb{v}}, \notag
\end{align}

\begin{align}
\tFDy =& -\tilde H^{-1}_y \left(-\sum_{j=1}^n  E_j \otimes \beta_{j1}^2 \mb{e_1}\dl^T + \sum_{j=1}^n E_j \otimes \beta_{jn}^2    \mb{e}_n\mb{d}_n^T\right)^T \tilde{\mb{v}}_t \notag \\
&+ \tilde H^{-1}_y \left(\frac{1}{h} \sum_{j=1}^n E_j \otimes   \hat\tau_{1j} \mb{e}_1\mb{e}_1^T + \frac{1}{h} \sum_{j=1}^n E_j\otimes \hat\tau_{2j} \mb{e}_n\mb{e}_n^T \right)\tilde{\mb{v}}_t \notag  \\
& -\tilde H^{-1}_y \left(-\sum_{j=1}^n E_j \otimes  \gamma_{j1}^2 \mb{e_1}\dl^T  + \sum_{j=1}^n E_j \otimes \gamma_{jn}^2 \mb{e}_n\mb{d}_n^T  \right)^T \tilde{\mb{v}} \notag \\
&+ \tilde H^{-1}_y \left(\frac{1}{h} \sum_{j=1}^n E_j \otimes \hat\tau_{3j} \mb{e}_1\mb{e}_1^T + \frac{1}{h} \sum_{j=1}^n E_j \otimes  \hat\tau_{4j} \mb{e}_n\mb{e}_n^T \right)\tilde{\mb{v}}. \notag
\end{align}
In the above, the material parameters are defined as $\beta_{ij}=\beta(x_i,y_j)$ and 
$\gamma_{ij}=\gamma(x_i,y_j)$. We have also used the notation $\tilde H_x = H_x \otimes I_y$ and $\tilde H_y = I_x \otimes H_y$, where $H_x,H_y$ are the SBP norm matrices and $I_x,I_y$ are identity matrices. The penalty parameters $\tau_{1j}$, $\tau_{2j}$, $\tau_{3j}$, $\tau_{4j}$, $\hat\tau_{1j}$, $\hat\tau_{2j}$, $\hat\tau_{3j}$, $\hat\tau_{4j}$ are determined through stability analysis. Since the SBP operators \eqref{D_Delta} are defined in a dimension-by-dimension manner, the stability analysis essentially follows from the corresponding one dimensional problem. To this end, we omit the proof and only state the choices of the penalty parameters so that an energy estimate can be obtained. 

\begin{theorem}
    The semidiscretization \eqref{semiD2D} satisfies an energy estimate if the penalty parameters satisfy 
    \begin{align*}
        \tau_{1j} \geq \frac{\beta_{1j}^4}{\theta\beta_{l,j,\min}^2},\ \tau_{2j} \geq \frac{\beta_{nj}^4}{\theta\beta_{r,j,\min}^2}, \  \tau_{3j} \geq \frac{\gamma_{1j}^4}{\theta\gamma_{l,j,\min}^2},\ \tau_{4j} \geq \frac{\gamma_{nj}^4}{\theta\gamma_{r,j,\min}^2}, \\
        \hat\tau_{1j} \geq \frac{\beta_{j1}^4}{\theta\beta_{b,j,\min}^2},\ \hat\tau_{2j} \geq \frac{\beta_{jn}^4}{\theta\beta_{t,j,\min}^2}, \  \hat\tau_{3j} \geq \frac{\gamma_{j1}^4}{\theta\gamma_{b,j,\min}^2},\ \textbf{}\tau_{4j} \geq \frac{\gamma_{jn}^4}{\theta\gamma_{t,j,\min}^2}.
    \end{align*}
\end{theorem}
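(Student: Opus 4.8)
The plan is to reduce the two-dimensional estimate to the one-dimensional analysis of Theorem~\ref{thm_dis_sta} by exploiting the tensor-product construction of the operators. I would multiply \eqref{semiD2D} (with zero forcing) by $\tilde{\mb v}_t^T \tilde H$, where $\tilde H = H_x\otimes H_y$ is the two-dimensional SBP quadrature. Because $\tilde D_\Delta^{(b)} = \tilde D_{xx}^{(b)} + \tilde D_{yy}^{(b)}$ by \eqref{D_Delta}, and $\tFD = \tFDx + \tFDy$ are both additive in the two coordinate directions, and the energy method is linear in these terms, the resulting energy rate splits into an $x$-contribution built from $\tilde D_{xx}^{(b)}$ and $\tFDx$, and a $y$-contribution built from $\tilde D_{yy}^{(b)}$ and $\tFDy$. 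No genuine cross terms appear, so the two directions can be treated independently.

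For the $x$-contribution, the key observation is the Kronecker identity $H_y E_j = h\omega_j E_j$, which gives $\tilde H \tilde D_{xx}^{(b)} = \sum_{j=1}^n h\omega_j (H_x D_2^{(b(\mb x, y_j))})\otimes E_j$. Consequently $\tilde{\mb v}_t^T \tilde H \tilde D_{xx}^{(b)}\tilde{\mb v}$ is a sum over horizontal grid lines, weighted by the positive quadrature weights $h\omega_j$, of exactly the one-dimensional expression appearing in the derivation of \eqref{DiscreteEnergy1D}. On each line $y_j$ the one-dimensional SBP identity \eqref{SBPidentityD2} applies with coefficients $\beta_{ij}$ and $\gamma_{ij}$, and the corresponding slice of $\tFDx$ reproduces the ansatz \eqref{FD} line by line. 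Thus the entire argument leading from \eqref{DiscreteEnergy1D} to \eqref{EA1D_s6} carries over to each line: the boundary terms are symmetrized, and nonpositivity of the viscous contribution reduces to the same $2\times2$ symmetric-positive-semidefinite conditions as in \eqref{tau1}--\eqref{tau2}, now with $\beta_{l,j,\min}$ and $\beta_{r,j,\min}$ denoting the minimum of $\beta$ over the boundary-closure points on line $j$. Since multiplication by the positive weight $h\omega_j$ preserves the sign, summing over $j$ keeps the $x$-contribution nonpositive precisely under the stated conditions on $\tau_{1j},\tau_{2j}$, while the $\gamma$-terms are absorbed into the discrete energy under the conditions on $\tau_{3j},\tau_{4j}$ coming from \eqref{tau34}.

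The $y$-contribution is handled symmetrically, now decomposing $\tilde{\mb v}_t^T \tilde H \tilde D_{yy}^{(b)}\tilde{\mb v}$ over vertical grid lines using $H_x E_j = h\omega_j E_j$, which yields the conditions on $\hat\tau_{1j},\dots,\hat\tau_{4j}$ with the minima $\beta_{b,j,\min},\beta_{t,j,\min},\gamma_{b,j,\min},\gamma_{t,j,\min}$ taken over the boundary-closure points at the bottom and top of line $j$. Assembling the two directions, the discrete energy is the sum $\tfrac12\|\tilde{\mb v}_t\|_{\tilde H}^2 + \tfrac12\|\tilde{\mb v}\|^2_{\tilde A^{(\gamma^2)}}$ plus the $x$- and $y$-boundary penalty terms, and its nonnegativity follows line by line from the $\gamma$-conditions exactly as in the one-dimensional case. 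I expect the main obstacle to be organizational rather than conceptual: one must verify carefully that the tensor-product bookkeeping is exact, in particular that the corner grid points, where both an $x$-penalty and a $y$-penalty act, cause no difficulty. This is resolved by the additive splitting $\tFD = \tFDx + \tFDy$, which makes each direction contribute its own independently nonpositive quadratic form, so no coupled corner condition ever arises and the per-line one-dimensional estimate suffices.
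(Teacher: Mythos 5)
Your proposal is correct and is essentially the paper's own (omitted) argument: the paper merely remarks that, because the operators in \eqref{D_Delta} and the SATs are built dimension-by-dimension, stability follows from the one-dimensional analysis, and your Kronecker bookkeeping (the identity $H_y E_j = h\omega_j E_j$, the positively weighted sum over grid lines, and the per-line application of Theorem~\ref{thm_dis_sta}) supplies exactly the details that were suppressed. Your observation that the additive splitting $\tFD = \tFDx + \tFDy$ prevents any coupled corner condition is also the right resolution of the only delicate point.
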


\rev{
\begin{remark}
    If $\beta$ is equal to zero on a grid point on the boundary, then we set the corresponding penalty parameter equal to zero. If $\beta$ is not equal to zero on the boundary, but is zero on some grid point
near the boundary, then we approximate the viscous term by using the fully-compatible SBP operator. The stability estimate follows in the same way as for the 1D problem, see Remark 1. 
\end{remark}
}

The semidiscretization for the Neumann problem takes the same form as in \eqref{semiD2D} by replacing $\tFD$ by $\tFN$,
\begin{align}
\tFN =& \tilde H^{-1}_x \left(-\sum_{j=1}^n \beta_{1j}^2 \mb{e_1}\dl^T \otimes E_j + \sum_{j=1}^n \beta_{nj}^2 \mb{e}_n\mb{d}_n^T \otimes E_j \right) \tilde{\mb{v}}_t \notag \\
& \tilde H^{-1}_x \left(-\sum_{j=1}^n \gamma_{1j}^2 \mb{e_1}\dl^T \otimes E_j + \sum_{j=1}^n \gamma_{nj}^2 \mb{e}_n\mb{d}_n^T \otimes E_j \right) \tilde{\mb{v}} \notag \\
& \tilde H^{-1}_y \left(-\sum_{j=1}^n  E_j \otimes \beta_{j1}^2 \mb{e_1}\dl^T + \sum_{j=1}^n E_j \otimes \beta_{jn}^2    \mb{e}_n\mb{d}_n^T\right) \tilde{\mb{v}}_t \notag \\
& \tilde H^{-1}_y \left(-\sum_{j=1}^n E_j \otimes  \gamma_{j1}^2 \mb{e_1}\dl^T  + \sum_{j=1}^n E_j \otimes \gamma_{jn}^2 \mb{e}_n\mb{d}_n  \right) \tilde{\mb{v}}. \notag 
\end{align}
Similarly, the semidiscretization also satisfies an energy estimate.

\section{Error estimates}\label{sec-err}
The truncation error of the SBP operator $D_2^{(b)}$ is $\mathcal{O}(h^{2p})$ in the interior, and $\mathcal{O}(h^{p})$ on the first $k$ grid points near each boundary. In the case of constant coefficient, we have $k=p$ for $p=1,2,3$. In the spatial discretization, it is often the boundary truncation error  $\mathcal{O}(h^{p})$ that determines the convergence rate. The precise convergence rate depends on the equation, the boundary condition, and how boundary condition is imposed. Because the number of grid points  with the truncation error $\mathcal{O}(h^{p})$ is independent of the mesh size, the overall convergence rate can be expected to be higher than $p$. The energy estimate of the error equation predicts a convergence rate $p + 1/2$, but this is suboptimal and higher rates are observed in numerical experiments. Sharper error estimates can be derived using the normal mode analysis, which gives a convergence rate of $p + 2$ for many problems, though there are special cases with lower or higher rates \cite{Wang2017,Wang2023,Wang2018b}. 

In this section, we derive an a priori error estimate for the semidiscretization \eqref{semiD1D} and consider the case with constant coefficients. The discretization in a bounded domain can be split into three parts and analyzed separately, consisting of two half-line problems in $[0,\infty)$ and $(-\infty,1]$, and one Cauchy problem in the unbounded domain $(-\infty,\infty)$. For the unbounded problem, central finite difference stencils with truncation error $\mathcal{O}(h^{2p})$ are used on all grid points, resulting in a convergence rate $2p$. Thus, we only need to analyze the two half-line problems. Without loss of generality, we consider the half-line problem on $[0,\infty)$, as the other half-line problem can be analyzed in the same way.

Let $\mb{u}=[u_1,u_2,\cdots]^T$ be the exact solution evaluated on the grid, i.e., $u_j = u(x_j, t)$ for $x_j = (j-1)h$ and $j=1,2,\cdots$, where $h$ is the grid size. We define the pointwise error $\bs{\varepsilon}=\mb{u}-\mb{v}$ with components $\pe_j = u_j - v_j$, which satisfies the error equation 
\begin{equation}\label{errD1D}
     \bs{\pe}_{tt} + \alpha\bs{\pe}_t - \beta^2 D_{xx}\bs{\pe}_t - \gamma^2 D_{xx} \bs{\pe} + \FDe = \mb{T}, 
\end{equation}
where $\FDe$ corresponds to the terms that impose the Dirichlet boundary condition at $x=0$, 
\begin{align}
    \FDe =& -H^{-1} (-\beta_1^2 \mb{e_1}\dl^T)^T \bs{\pe}_t +  H^{-1} \left(\frac{\tau_1}{h}\mb{e}_1\mb{e}_1^T  \right)\bs{\pe}_t\notag \notag\\
    & -H^{-1} (-\gamma_1^2 \mb{e_1}\dl^T )^T \bs{\pe} + H^{-1} \left(\frac{\tau_3}{h}\mb{e}_1\mb{e}_1^T\right)\bs{\pe}.\label{FDe}
\end{align}
The operator $D_{xx}$ approximates the second derivative $d^2/dx^2$, and is the same as $D_{xx}^{(b)}$ when $b(x)\equiv 1$. The right-hand side $\mb{T}$ is the truncation error and takes the form 
\begin{align*}
\mb{T} =& [\underbrace{\mathcal{O}(h^p),\cdots,\mathcal{O}(h^p)}_{\text{the first $p$ grid points}}, \mathcal{O}(h^{2p}),\cdots\cdots,\mathcal{O}(h^{2p}),\cdots\cdots]^T .    \\
=&\underbrace{[\mathcal{O}(h^p),\cdots,\mathcal{O}(h^p), 0,\cdots\cdots,0,\cdots\cdots]^T}_{\mb{T}_p} \\
&+\underbrace{[0,\cdots,0, \mathcal{O}(h^{2p}),\cdots\cdots,\mathcal{O}(h^{2p}),\cdots\cdots]^T}_{\mb{T}_{2p}}.
\end{align*}
Here, $\mb{T}$ is split into two parts, the boundary truncation error $\mb{T}_p$ and the interior truncation error $\mb{T}_{2p}$. There are only $p$ nonzero components in $\mb{T}_p$, while there are only $p$ zeros in $\mb{T}_{2p}$. As a consequence, we have $\|\mb{T}_p\|_H=\mathcal{O}(h^{p+1/2})$ and $\|\mb{T}_{2p}\|_H=\mathcal{O}(h^{2p})$.

Accordingly, we also split the pointwise error as $\bs{\pe}=\bs{\pep}+\bs{\pepp}$ such that 
\begin{align}
     \bs{\pep}_{tt} + \alpha\bs{\pep}_t - \beta^2 D_{xx}\bs{\pep}_t - \gamma^2 D_{xx} \bs{\pep} + \FDp = \mb{T}_p, \label{pep_eqn}\\
     \bs{\pepp}_{tt} + \alpha\bs{\pepp}_t - \beta^2 D_{xx}\bs{\pepp}_t - \gamma^2 D_{xx} \bs{\pepp} + \FDpp = \mb{T}_{2p}, \label{pepp_eqn}
\end{align}
where $\FDp$ and $\FDpp$ are defined in the same way as $\FDe$ in \eqref{FDe} by replacing $\bs\pe$ by $\bs\pep$ and $\bs\pepp$, respectively. The error component $\bs\pepp$, driven by the interior truncation error $\mb{T}_{2p}$, can be estimated by applying the energy method to \eqref{pepp_eqn}, leading to $\vertiii{\bs\pepp}_{h,D,0}\leq C\|\bs{T}_{2p}\|_H\leq Ch^{2p}$. The subscript $0$ in the discrete energy norm indicates that only boundary contribution from the left boundary $x=0$ is included. 

It is the error component $\bs\pep$ that dominates the pointwise error $\bs\pe$. To estimate $\bs\pep$ by the normal mode analysis, we consider a particular choice $p=2$, i.e., the SBP operator $D_{xx}$ consists of the fourth order accurate central finite difference stencil in the interior on grid points $x_j, j=5,6,\cdots$, and second order accurate boundary closure on the first four grid points $x_j,j=1,2,3,4$. 

To continue, we Laplace transform \eqref{pep_eqn} and obtain 
\begin{align}
     s^2\hat{\bs{\pep}} + \alpha s\hat{\bs{\pep}} - \beta^2 s D_{xx}\hat{\bs{\pep}} - \gamma^2 D_{xx} \hat{\bs{\pep}} + \FDpL = {\hat{\mb{T}}}_2, \label{pep_eqnL}
\end{align}
where the hat-symbol denotes variables in the Laplace space, and $s$ is the time dual. On the interior grid points, \eqref{pep_eqnL} reduces to 
\begin{equation}\label{pep_eqnLi}
     s^2\hat{\pep}_j + \alpha s\hat{\pep}_j - (\beta^2 s + \gamma^2) D_{xx,I}\hat{\pep}_j  = 0,\quad j=5,6,\cdots,
\end{equation}
where the fourth order accurate finite difference stencil is 
\[
 D_{xx,I}\hat{\pep}_j = \frac{1}{h^2}\left(-\frac{1}{12}\hat{\pep}_{j-2} + \frac{4}{3}\hat{\pep}_{j-1} - \frac{5}{2}\hat{\pep}_{j}  + \frac{4}{3}\hat{\pep}_{j+1} -\frac{1}{12}\hat{\pep}_{j+2}  \right). 
\]
We note that in the interior, the boundary term  $\FDpL$ has no contribution, and the truncation error ${\hat{\mb{T}}}_p$ only contains zeros. 

Rearranging the terms in \eqref{pep_eqnLi}, we have the following difference equation, 
\begin{equation}\label{interior_diff}
-\frac{1}{12}\hat{\pep}_{j-2}+\frac{4}{3}\hat{\pep}_{j-1}-\left(\frac{5}{2}+h^2\frac{s^2+\alpha s}{\beta^2 s + \gamma^2}\right)\hat{\pep}_{j} + \frac{4}{3}\hat{\pep}_{j+1}-\frac{1}{12}\hat{\pep}_{j+2}=0,\quad j=5,6,\cdots. 
\end{equation}
The corresponding characteristic equation is 
\begin{equation}\label{cha_eqn}
-\frac{1}{12}+\frac{4}{3}\kappa - \left(\frac{5}{2}+\underbrace{h^2\frac{s^2+\alpha s}{\beta^2 s + \gamma^2}}_{r}\right)\kappa^2+\frac{4}{3}\kappa^3-\frac{1}{12}\kappa^4 = 0,
\end{equation}
and has four roots. Since the semidiscretization satisfies an energy estimate and is thus stable, only roots with $|\kappa|<1$ are admissible, and it suffices to consider bounded $s$ with $Re(s)>0$ and $|s|<C$ for some constant $C$ independent of $h$. Consequently, in the asymptotic regime when $h$ goes to zero, we have $|r|$ goes to zero. Solving \eqref{cha_eqn}, we find that the admissible roots satisfying $|\kappa|<1$ are 
\begin{equation}\label{AdmissibleRoots}
\kappa_1 = q + 4 - \sqrt{8q-3r+24},\quad \kappa_2 = -q + 4 - \sqrt{-8q-3r+24},
\end{equation}
where $q=\sqrt{9-3r}$ and $r$ is defined in \eqref{cha_eqn}. When $s=0$, we have $|\kappa_1|=7-4\sqrt{3}\approx 0.0718<1$ and $|\kappa_2|=1$. Thus, $\kappa_1$ corresponds to a fast decaying mode and $\kappa_2$ corresponds to a slowly decaying mode. 

On the first four grid points, the SBP operator $D_{xx}$ has one-sided boundary closure and $\hat{\mb{T}}_2$ has nonzero components. Equation \eqref{pep_eqnL} becomes
\begin{equation}\label{pep_eqnLB}
 s^2\hat{\bs{\pep}}_B + \alpha s\hat{\bs{\pep}}_B - \beta^2 s D_{xx,B}\hat{\bs{\pep}}_B - \gamma^2 D_{xx,B} \hat{\bs{\pep}}_B + \FDpLB = {\hat{\mb{T}}}_{2,B},    
\end{equation}
where $\hat{\bs{\pep}}_B$, $\FDpLB$ and ${\hat{\mb{T}}}_{2,B}$ are 4-by-1 vectors containing the first four components of $\hat{\bs{\pep}}$, $\FDpL$ and ${\hat{\mb{T}}}_{2}$, respectively. The boundary closure is stored in the 4-by-6 matrix $D_{xx,B}$, and the precise components can be found in \cite{Mattsson2004}. 

Equation \eqref{interior_diff} is a linear recurrence relation with two admissible roots $\kappa_1$ and $\kappa_2$. Using them,  we make the ansatz
\[
\hat{{\pep}}_j = \sigma_1 \kappa_1^{j-3} +  \sigma_2 \kappa_2^{j-3},\quad j = 3,4,\cdots,
\]
where the coefficients $\sigma_1,\sigma_2$ and the first two components $\hat{{\pep}}_1,\hat{{\pep}}_2$ are determined by using the boundary closure in \eqref{pep_eqnLB}. Then, the error $\hat{\bs{\pep}}$ in $l^2$ norm is 
\begin{align}
\|\hat{{\bs\pep}}\|_h^2 =& h\sum_{j=1}^{\infty}|\hat{{\pep}}_j|^2 \notag \\
=& h (|\hat{{\pep}}_1|^2 + |\hat{{\pep}}_2|^2) + \frac{h|\sigma_1|^2}{1-|\kappa_1|^2} + \frac{h|\sigma_2|^2}{1-|\kappa_2|^2}.\label{estimate_pep_1}
\end{align}
We need the following lemma for $\kappa_2$ to bound \eqref{estimate_pep_1}.

\begin{lemma}
    The admissible root $\kappa_2$ in \eqref{AdmissibleRoots} satisfies 
    \begin{equation}\label{estimate_kappa2}
        \frac{1}{1-|\kappa_2|^2} \leq Ch^{-1}, 
    \end{equation}
    for a constant $C$ that depends on the material parameters but not $h$. 
\end{lemma}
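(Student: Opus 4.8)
The plan is to quantify the rate at which $|\kappa_2|$ leaves the unit circle by exploiting the fact, already noted in the text, that $\kappa_2\to 1$ precisely as $r\to 0$, and that $r\to 0$ as $h\to 0$. Write $r=h^2\rho$ with $\rho=\frac{s^2+\alpha s}{\beta^2 s+\gamma^2}$. First I would record that on the admissible set $\{\mathrm{Re}(s)>0,\ |s|<C\}$ the denominator has $\mathrm{Re}(\beta^2 s+\gamma^2)=\beta^2\mathrm{Re}(s)+\gamma^2\geq\gamma^2>0$, so it is bounded away from zero; together with $|s|<C$ this gives $|\rho|\leq C_1$ and hence $r=O(h^2)\to 0$ uniformly as $h\to 0$.

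Next I would Taylor expand the closed-form root about $r=0$. From $q=\sqrt{9-3r}=3-\tfrac{r}{2}+O(r^2)$ one obtains $-8q-3r+24=r+O(r^2)$, so $\sqrt{-8q-3r+24}=\sqrt{r}\,(1+O(r))$, and therefore
\begin{equation*}
\kappa_2=-q+4-\sqrt{-8q-3r+24}=1-\sqrt{r}+O(r),
\end{equation*}
where $\sqrt{\cdot}$ is the principal branch, i.e.\ the one producing $|\kappa_2|<1$. Using the identity $1-|\kappa_2|^2=2\,\mathrm{Re}(1-\kappa_2)-|1-\kappa_2|^2$ together with $1-\kappa_2=\sqrt{r}+O(r)=h\sqrt{\rho}+O(h^2)$, this yields the key expansion
\begin{equation*}
1-|\kappa_2|^2=2h\,\mathrm{Re}(\sqrt{\rho})+O(h^2).
\end{equation*}

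The crux is then to show that $\mathrm{Re}(\sqrt{\rho})$ is bounded below by a positive constant $c_0$ independent of $h$. Since $\mathrm{Re}(\sqrt{\rho})>0$ is equivalent to $\rho\notin(-\infty,0]$, I would argue by contradiction: if $\rho=-t$ for some $t\geq 0$, then $s$ solves $s^2+(\alpha+t\beta^2)s+t\gamma^2=0$, a quadratic all of whose coefficients are nonnegative. By the Routh--Hurwitz criterion (or directly: the sum of the roots is $-(\alpha+t\beta^2)\leq 0$ and the product is $t\gamma^2\geq 0$) every root has $\mathrm{Re}(s)\leq 0$, contradicting $\mathrm{Re}(s)>0$. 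Hence $\rho$ stays off the closed negative real axis; moreover $\mathrm{Re}(s)>0$ keeps $s$ away from the numerator zeros $0$ and $-\alpha$, so $\rho$ is also bounded away from $0$. On the compact part of the admissible set relevant to the Laplace inversion (a contour segment $\mathrm{Re}(s)=\eta>0$), $\mathrm{Re}(\sqrt{\rho})$ is continuous and strictly positive, hence $\mathrm{Re}(\sqrt{\rho})\geq c_0>0$.

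Combining the pieces, for $h$ sufficiently small the $O(h^2)$ remainder is dominated by the leading term, so $1-|\kappa_2|^2\geq c_0 h$, which gives $\frac{1}{1-|\kappa_2|^2}\leq C h^{-1}$ with $C=1/c_0$ depending only on the material parameters (and the chosen contour). I expect the main obstacle to be exactly the uniform positive lower bound on $\mathrm{Re}(\sqrt{\rho})$: the root-location argument secures strict positivity cleanly, but controlling it uniformly as $s$ ranges over the admissible region—in particular preventing degeneration where $|\rho|$ becomes small near $s=0$ or near the imaginary axis—is the delicate point, and it is what pins down the region on which the estimate holds.
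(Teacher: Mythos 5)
Your proposal is correct and follows essentially the same route as the paper's proof: expand $\kappa_2 = 1 - h\sqrt{\rho} + \mathcal{O}(h^2)$ with $\rho = (s^2+\alpha s)/(\beta^2 s+\gamma^2)$, restrict to a contour $\mathrm{Re}(s)=\eta>0$, and bound $1-|\kappa_2|^2 \geq 2h\,\mathrm{Re}(\sqrt{\rho})$ to leading order. Your Routh--Hurwitz argument that $\rho$ avoids the closed negative real axis, hence $\mathrm{Re}(\sqrt{\rho})>0$, merely makes explicit the positivity of the coefficient ($c_r>0$) that the paper assumes implicitly, so it is added rigor rather than a different approach.
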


\begin{proof}
Using the formula in  \eqref{AdmissibleRoots}, we expand $\kappa_2$ for small $h$, 
\begin{equation}
    \kappa_2 = 1 - \frac{\sqrt{s^2+\alpha s}}{\sqrt{s\beta^2+\gamma^2}}h + \frac{s^2+\alpha s}{2(s\beta^2+\gamma^2)}h^2 + \mathcal{O}(h^3).
\end{equation}
    We consider $Re(s)=\eta>0$ for some $\eta$, and 
    denote the coefficient for $h$ as $\frac{\sqrt{s^2+\alpha s}}{\sqrt{s\beta^2+\gamma^2}}=c_r+ic_i$ for real $c_r, c_i$ that also depend on $\eta$. We obtain to the leading order,
    \begin{align*}
        \frac{1}{1-|\kappa_2|^2} &\leq  \frac{c}{1-|1-c_r h-i c_i h|^2} \leq \frac{c}{2c_r h}.
    \end{align*}
    The desired estimate \eqref{estimate_kappa2} follows by setting $C=c/(2c_r)$. 
\end{proof}
We note that the slow-decaying component $\kappa_2$ satisfies $|\kappa_2|=1$ when $s=0$, and $|\kappa_2|=1+\mathcal{O}(h)$ in the vicinity of $s=0$, which leads to $1/(1-|\kappa_2|^2)=\mathcal{O}(h^{-1})$. The corresponding term for the fast-decaying component $\kappa_1$ can be bounded independent of $h$, i.e., $1/(1-|\kappa_1|^2)=\mathcal{O}(1)$. In the following, we estimate the four unknown components $\sigma_1,\sigma_2,\hat{{\pep}}_1,\hat{{\pep}}_2$, which will be combined with \eqref{estimate_kappa2} to derive an estimate for \eqref{estimate_pep_1}.

Equivalently, we can write  \eqref{pep_eqnLB} as a linear system 
\begin{equation}\label{BS}
A\Sigma = h^2{\hat{\mb{T}}}_{2,B},
\end{equation}
where $\Sigma=[\hat{{\pep}}_1,\hat{{\pep}}_2,\sigma_1,\sigma_2]^T$, and $h^2$ on the right-hand side comes from the $h^2$ factor in $D_{xx,B}$. Since the boundary closure is second order accurate, we have $h^2{\hat{\mb{T}}}_{2,B}=\mathcal{O}(h^{4})$. The 4-by-4 matrix $A$ depends on $s$, $h$, the material parameters $\alpha,\beta,\gamma$, and the penalty parameters. In a stable semidiscretization, $A$ is nonsingular for all $s$ with $Re(s)>0$ \cite{Gustafsson2013}. Thus, we may write the solution to \eqref{BS} as $\Sigma = h^2 A^{-1} {\hat{\mb{T}}}_{2,B}$. To derive an estimate for $\Sigma$, it is important to analyze the dependence of $A^{-1}$ on $h$. 

If $A$ is nonsingular when $s=0$, then the so-called determinant condition is satisfied \cite{Gustafsson2013}. In this case, all components of $\Sigma$ are $\mathcal{O}(h^{4})$. Consequently, we have $\|\hat{{\bs\pep}}\|_h = \mathcal{O}(h^{4})$ in \eqref{estimate_pep_1}.

If the determinant condition is not satisfied, i.e., $A$ is singular when $s=0$, then a perturbation analysis is needed to obtain the precise dependence of $\Sigma$ on $h$.  It is important to note that components of $\Sigma$ may depend on $h$ in different ways \cite{Wang2017}. In fact, to obtain  $\|\hat{{\bs\pep}}\|_h = \mathcal{O}(h^{4})$, it is enough to have   $\sigma_1,\sigma_2,\hat{{\pep}}_1= \mathcal{O}(h^{3})$, i.e., we can afford to lose one order in these variables because of a singular $A$. The coefficient $\sigma_2$ multiplying with the term of the slow-decaying component $\kappa_2$ must be 
$\mathcal{O}(h^{4})$. Since the linear system \eqref{BS} depends on the material parameters, we divide the analysis into four cases. 

\paragraph{Case 1: $\alpha=\beta=0$} In this case, the diffusive viscous wave equation \eqref{DVWE} reduces to the wave equation. The corresponding a priori error estimates were derived in \cite{Wang2017}, and we refine the analysis below.

When $\beta=0$, the terms with $\bs{\pep}_t$ in $\FDpLB$ in \eqref{pep_eqnLB} vanishes, and only one penalty parameter $\tau_3$ remains. By Theorem \ref{thm_dis_sta}, the discretization is stable if $\tau_3\geq \gamma^2/\theta$. The determinant condition is satisfied for all $\tau_3 > \gamma^2/\theta$, and $\|\hat{{\bs\pep}}\|_h = \mathcal{O}(h^{4})$ follows. On the stability limit $\tau_3 = \gamma^2/\theta$, the determinant condition is not satisfied, in which case the error estimate is obtained by the energy estimate in \cite{Wang2017}. Below we show that it can be analyzed directly by solving \eqref{BS}. 

Consider $s\neq 0$ in a vicinity of the origin, then $A$ is nonsingular. For small $h$, we solve the boundary system \eqref{BS}, and obtain
\begin{equation}\label{case1}
\hat{{\pep}}_1,\hat{{\pep}}_2,\sigma_1= \mathcal{O}(s^{-2}h^{2}), \quad \sigma_2=\mathcal{O}(h^{4}). 
\end{equation}
This means that the singularity of $A$ at $s=0$ does not affect the coefficient for the slow-decaying component $\kappa_2$, but two orders in $h$ are lost in the other three variables $\hat{{\pep}}_1, \hat{{\pep}}_2, \sigma_1$. Substituting \eqref{case1} into \eqref{estimate_pep_1}, we obtain 
\begin{equation}\label{case1_2.5}
    \|\hat{{\bs\pep}}\|_h \leq Ch^{2.5}.
\end{equation}

The above estimate \eqref{case1_2.5} gives a convergence rate 2.5. In addition, the error is dominated by the two pointwise errors $\hat{{\pep}}_1, \hat{{\pep}}_2$, and decays exponentially fast away from the boundary. Both the observed  convergence rate and the error behavior in numerical examples in \cite{Wang2017} agree with the above analysis. 

\paragraph{Case 2: $\alpha\neq 0, \beta=0$} 
The diffusive attenuation term is nonzero in the governing equation, but it does not affect the numerical boundary treatment.  By Theorem \ref{thm_dis_sta}, the discretization is stable if $\tau_3\geq \gamma^2/\theta$. Similar as in case 1, the determinant condition is satisfied if  $\tau_3 > \gamma^2/\theta$, which leads to $\|\hat{{\bs\pep}}\|_h = \mathcal{O}(h^{4})$. When  $\tau_3 = \gamma^2/\theta$, the matrix $A$ is singular at $s=0$. We solve \eqref{BS} in a vicinity of $s=0$, and obtain
\begin{equation}\label{case2}
\hat{{\pep}}_1,\hat{{\pep}}_2,\sigma_1= \mathcal{O}(s^{-1}h^{2}), \quad \sigma_2=\mathcal{O}(h^{4}). 
\end{equation}
Comparing \eqref{case2} with \eqref{case1}, we observe that all four variables have the same $h$-dependence. Thus, the same error estimate \eqref{case1_2.5} is obtained, and the convergence rate is 2.5. 

\paragraph{Case 3: $\alpha= 0, \beta\neq 0$}
The viscous attenuation term is nonzero, and play an important role in the error estimate. By Theorem \ref{thm_dis_sta}, the discretization is stable if $\tau_1\geq \beta^2/\theta$ and $\tau_3\geq \gamma^2/\theta$. We find that the determinant condition is not satisfied if  $\tau_3 = \gamma^2/\theta$ and  $\tau_1 =  \beta^2/\theta $. In this case, we solve \eqref{BS} in a vicinity of $s=0$, and obtain 
\begin{equation}\label{case3}
\hat{{\pep}}_1,\hat{{\pep}}_2,\sigma_1= \mathcal{O}(s^{-2}h^2), \quad \sigma_2=\mathcal{O}(h^{4}), 
\end{equation}
which is the same as in Case 1.

\paragraph{Case 4: $\alpha\neq 0, \beta\neq 0$} Both the diffusive and the viscous attenuation terms are nonzero. The situation is similar as in Case 3, that the determinant condition is not satisfied if  $\tau_3 = \gamma^2/\theta$ and  $\tau_1 =  \beta^2/\theta $. The solution to \eqref{BS} in a vicinity of $s=0$ is 
\begin{equation}\label{case4}
\hat{{\pep}}_1,\hat{{\pep}}_2,\sigma_1= \mathcal{O}(s^{-1}h^{2}), \quad \sigma_2=\mathcal{O}(h^{4}),
\end{equation}
which is the same as in Case 2.

In the above, we have derived error estimates for $\|\hat{{\bs\pep}}\|_h$ in Laplace space. Parseval's relation can be used to obtain the corresponding error estimates in physical space to the same order in $h$, see \cite{Wang2017}. 
In conclusion,  it is important to choose the penalty parameters strictly larger than the value required by stability, so that the fourth order convergence rate is obtained.  

\rev{
\begin{remark}
The normal mode analysis can only be carried out for the constant coefficient case \cite{Gustafsson2013}. For problems with variable coefficients, a priori error estimates can be derived by the energy estimate. Consider the semidiscretrization \eqref{semiD1D}, the pointwise error $\bs{\varepsilon}=\mb{u}-\mb{v}$ satisfies the error equation 
\begin{equation}\label{errD1Dvc}
     \bs{\pe}_{tt} + \Lambda_{\alpha}\bs{\pe}_t - D_{xx}^{(\beta^2)}\bs{\pe}_t -  D_{xx}^{(\gamma^2)} \bs{\pe} + \FDe = \mb{T}, 
\end{equation}
which has exactly the same form as  \eqref{semiD1D} with the forcing function $\mb{f}$ replaced by the truncation error $\mb{T}$. Since  \eqref{semiD1D} satisfies an energy estimate, the error equation \eqref{errD1Dvc} also satisfies an energy estimate, 
\[
 \vertiii{\bs{\varepsilon}}_{h,D}\leq C\|\mb{T}\|_h. 
\]

With the standard SBP operators from \cite{Mattsson2012}, the truncation error takes the form 
\[
\mb{T}= [\underbrace{\mathcal{O}(h^p),\cdots,\mathcal{O}(h^p)}_{\text{the first $k$ grid points}}, \mathcal{O}(h^{2p}),\cdots\cdots,\mathcal{O}(h^{2p}),\underbrace{\mathcal{O}(h^p),\cdots,\mathcal{O}(h^p)}_{\text{the last $k$ grid points}}]^T,
\]
and we have $\|\mb{T}\|_h=\mathcal{O}(h^{p+1/2})$ because the number of grid points with truncation error $\mathcal{O}(h^p)$ is independent of $h$. In the case when $p=2$, this gives a convergence rate of 2.5 in the energy norm, which amounts to 3.5 in $l^2$ norm. 
\end{remark}
}

\section{Numerical experiments}
We present numerical examples to verify stability and accuracy properties of the developed method. In all examples, we examine properties related to the spatial discretization. For time integration, we choose the classical fourth order accurate Runge-Kutta method and use a stepsize small enough so that the error in the numerical solution is dominated by the spatial discretization. For the case when  the viscous attenuation term is nonzero, explicit time integration requires a parabolic-type restriction on the time step, chosen to be $0.1h^2$.   This can be improved by using implicit methods with stepsize restriction $\sim h$. 

At the final time, we compute the error in the discrete $l^2$ norm as 
\[
\|\mb u - \mb v \|_{l^2} = \sqrt{h^d \sum_{j=1}^n (u_j-v_j)^2 },
\]
where the vector $\mb u$ contains the pointwise evaluation of the manufactured, exact solution on the grid, and  $\mb v$ is the numerical solution vector, $h$ is the grid spacing and $d$ is the spatial dimension.

\subsection{Constant coefficients in one dimension}
We consider the diffusive viscous wave equation in one space dimension 
\[
u_{tt} - \alpha u_t - \beta^2 u_{xxt} - \gamma^2 u_{xx} = f,\quad x\in \Omega_{1d},\quad t\in [0, 0.5].
\]
Here, the second derivative in space $\partial^2/\partial x^2$ can be approximated by using the SBP operator $D_2^{(b)}$ with $b=1$, which is equivalent to the second derivative SBP operator with constant coefficient constructed in \cite{Mattsson2004}. 

First, we consider a similar problem to the example in Sec. 5.1 from \cite{Ling2023}, with material properties $\alpha=1$, $\beta=\gamma=0.1$, and manufactured solution $u=e^{-t}\cos (2\pi x)$. In this case, the forcing function $f$ is zero and the initial conditions are $u(x,0)=\cos (2\pi x)$ and $u_t(x,0)=-\cos (2\pi x)$. We choose the spatial domain $\Omega_{1d}= [0.1, 1.1]$ instead of $[0,1]$ to avoid special zero boundary data, and discretize by 81 grid points in space. 
\begin{figure}
    \centering
    \includegraphics[width=0.45\textwidth]{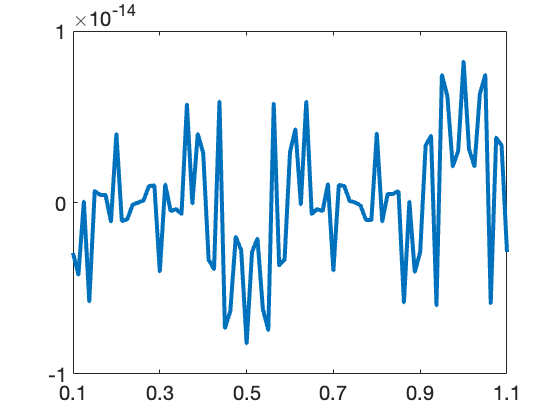}
    \includegraphics[width=0.45\textwidth]{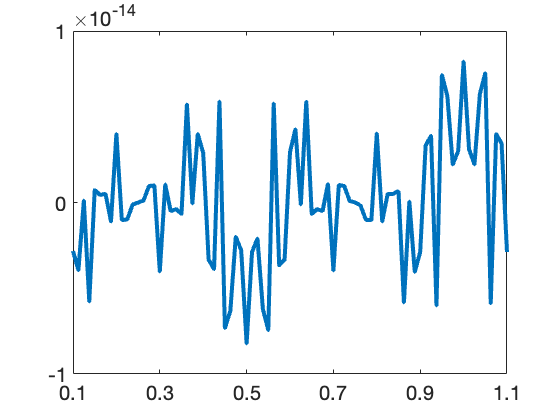}
    \caption{Error for the Dirichlet problem (left) and Neumann problem (right).}
    \label{fig:ex1}
\end{figure}

In Figure \ref{fig:ex1}, we plot the error at the final time for the problem with the Dirichlet boundary condition and the Neumann boundary condition. We observe that in both cases, the errors are zero to machine precision, and the numerical solutions are exact. This is a special case, because of the manufactured solution and material properties, the truncation errors of the spatial discretization cancel. More precisely, for the Dirichlet problem, the truncation errors are caused by the approximations of $\beta^2 u_{xxt}$ and $\gamma^2 u_{xx}$, and are in the form 
$T_{\beta}=\beta^2 Ch^{2p} \partial^{2p+2}/\partial x^{2p+2} u_t$ and $T_{\gamma}=\gamma^2 Ch^{2p} \partial^{2p+2}/\partial x^{2p+2} u$ on the interior grid points $\mathcal{N}_I$. On the same grid point, the constants $C$ in the truncation error $T_{\beta}$ and $T_{\gamma}$ are the same. With the manufactured solution $u=e^{-t}\cos (2\pi x)$, we have $u_t=-u$. Consequently, we have $T_{\beta}+T_{\gamma}=0$ when the coefficients are equal $\beta^2=\gamma^2$. In the same way, the truncation errors on the boundary grid points in $\mathcal{N}_B$ also cancel, resulting a spatial discretization without any truncation error. For the Neumann problem, there is an extra source of truncation error from the terms in \eqref{FN} imposing the boundary condition. For the same reason, we have $\FN=0$ when the manufactured solution satisfies $u_t=-u$ and the material parameters satisfy $\beta^2=\gamma^2$. We note that this error cancellation phenomenon does not occur in the LDG method shown in \cite{Ling2023}.

To verify convergence of our method, we choose a different manufactured solution so that the truncation errors do not cancel, $u=e^{-2t}\cos(2\pi x)$. In addition, we extend the time domain to $t\in [0,5]$ and allow sufficient time for the boundary truncation error to propagate into the interior. We consider both problems with Dirichlet boundary conditions and Neumann boundary conditions, and different combinations of material parameters corresponding to the cases in the error estimates. The initial data, boundary data and the forcing function are obtained through the manufactured solution.   

\begin{figure}
    \centering
    \includegraphics[width=0.45\textwidth]{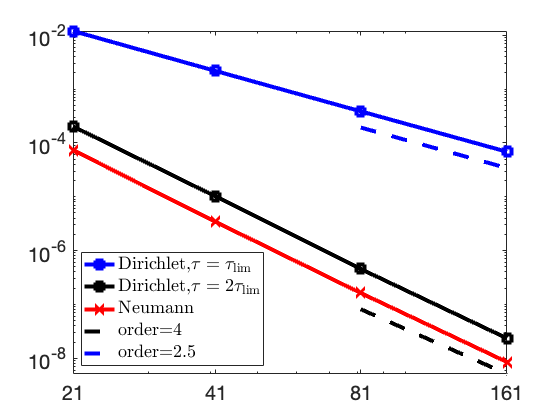}
    \includegraphics[width=0.45\textwidth]{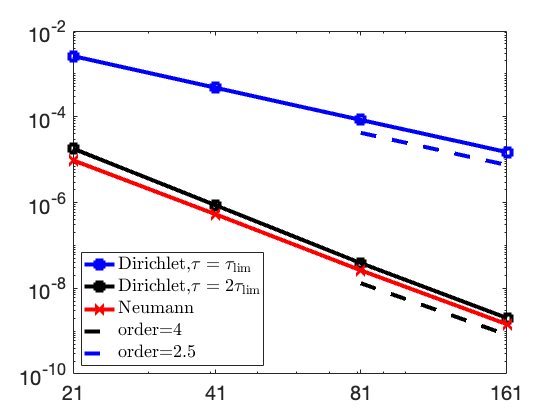}
    \includegraphics[width=0.45\textwidth]{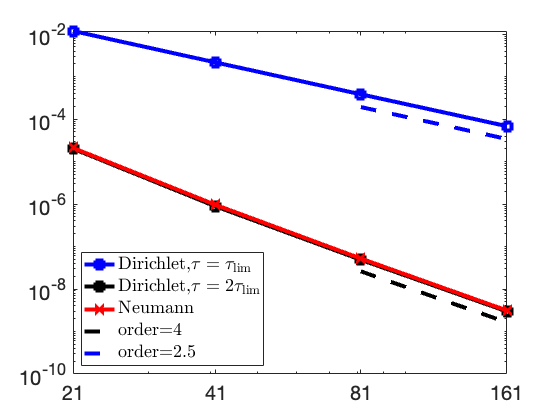}
    \includegraphics[width=0.45\textwidth]{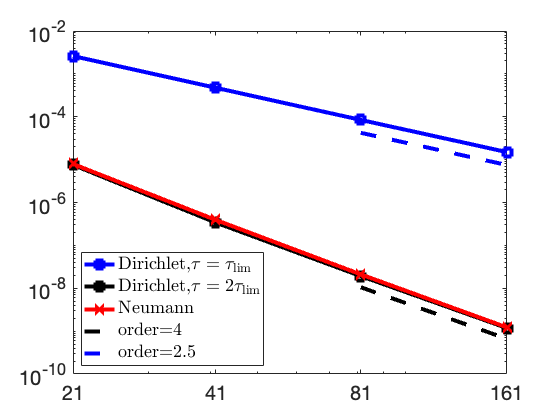}
    \caption{Error plots for Case 1 (top left), Case 2 (top right), Case 3 (bottom left) and Case 4 (bottom right).}
    \label{fig:ex1_1}
\end{figure}

In Figure \ref{fig:ex1_1}, We show the error plots for the four cases analyzed in Sec.~\ref{sec-err}, with material parameters defined as below 
\begin{align*}
    &\text{Case 1:}\ \alpha=\beta=0,\gamma=0.1,\quad \text{Case 2:}\ \alpha=1, \beta=0,\gamma=0.1,\\
    &\text{Case 3:}\ \alpha=0,\beta=\gamma=0.1,\quad \text{Case 4:}\ \alpha=1, \beta=0.1=\gamma=0.1.
\end{align*}
We observe that the numerical results agree very well with the error estimates derived in Sec.~\ref{sec-err}.

\subsection{Variable coefficients in one dimension}
We consider the diffusive viscous wave equation in one space dimension with variable coefficients, 
\[
u_{tt} - \alpha u_t - (\beta^2 u_{x})_{xt} - (\gamma^2 u_{x})_x = f,\quad x\in \Omega_{1d},\quad t\in [0, 0.5],
\]
\begin{figure}
    \centering
    \includegraphics[width=0.45\textwidth]{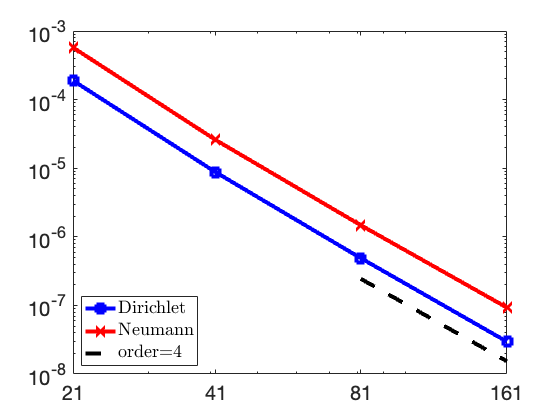}
    \includegraphics[width=0.45\textwidth]{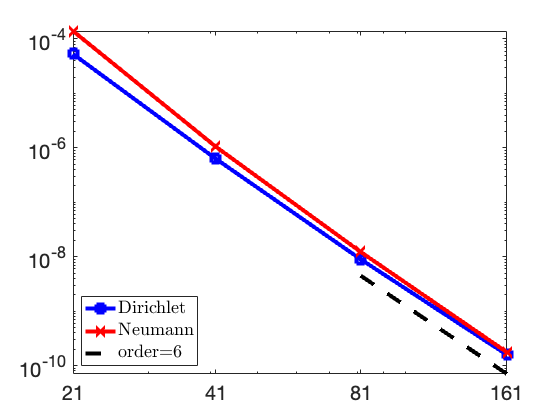}
    \caption{Error plots for the case with variable coefficients: the fourth order method (left) and the sixth order method (right).}
    \label{fig:ex2}
\end{figure}
with material properties 
\[
\alpha = e^{-x},\quad \beta = 0.2+0.1\sin(2\pi x),\quad \gamma = 0.15+0.1\sin(2\pi x).
\]
We choose the manufactured exact solution $u=e^{-2t}\cos(2\pi x)$ to compute initial and boundary data, and the forcing function $f$.

In Figure \ref{fig:ex2}, we plot the $l^2$ errors at the final time of the problem with Dirichlet or Neumann boundary conditions. For the Dirichlet boundary condition, the penalty parameters are chosen to be twice the limit by the stability requirement. With the fourth order accurate SBP operator with variable coefficient, the observed convergence rate is also fourth order, which aligns well with the error analysis for problems with constant coefficients.  In addition, we also used the sixth order accurate SBP operators and obtained a nearly sixth order convergence rate, shown in the error plot on the right side in Figure \ref{fig:ex2}. We note that this is half an order higher than the error analysis of the sixth order SBP operator for the wave equation with constant wave speed, whose convergence rate is 5.5 \cite{Wang2017}. 

\subsection{Two space dimension}
We consider the model problem \eqref{DVWE} in two space dimension $\Omega=[0,1]^2$, and a setup similar to numerical example 5 in \cite{Ling2023}. The material parameters are $\alpha=0$, $\beta=0.1$, and $\gamma = 0.4$. With both zero initial and boundary data, the solution is driven by forcing function $f$ in the form of a Ricker wavelet, 
\[
f = A (1-2\pi^2f_{re}^2 (t-0.1)^2) e^{-\pi^2f_{re}^2 (t-0.1)^2}.
\]

\begin{figure}
    \centering
    \includegraphics[width=0.45\textwidth]{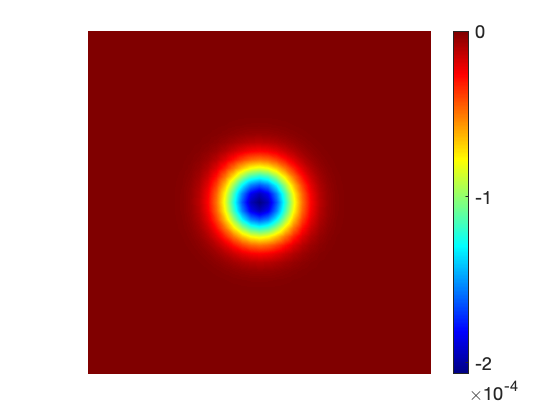}
    \includegraphics[width=0.45\textwidth]{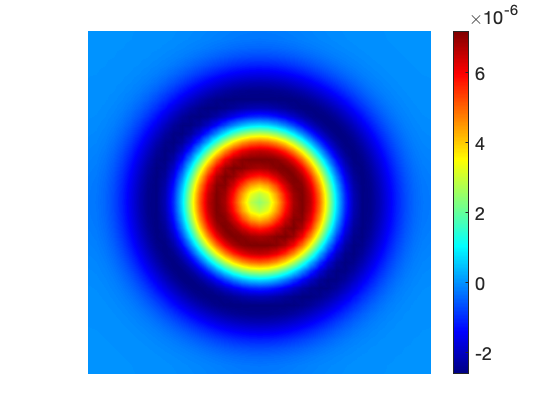}\\
    \includegraphics[width=0.45\textwidth]{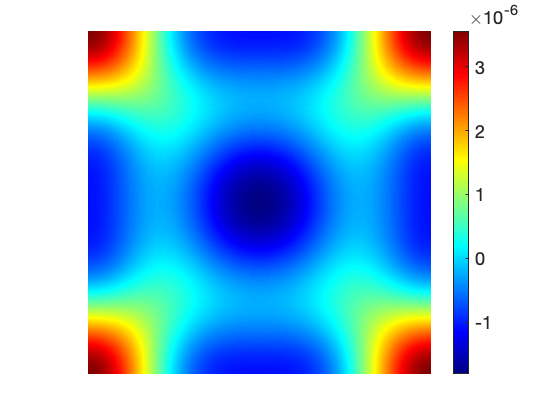}
    \includegraphics[width=0.45\textwidth]{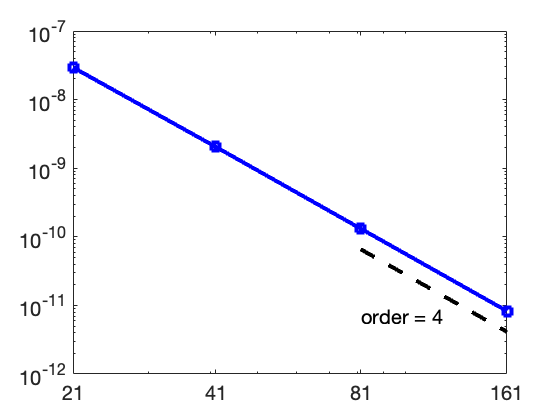}
    \caption{Solution plots for the case with Ricker wavelet computed with $41^2$ grid points at time $t=0.1$ (top left), $t=0.5$ (top right), and $t=2$ (bottom left); error plot for the solutions at $t=0.5$ (bottom right).}
    \label{fig:ex3}
\end{figure}

In numerical experiments, we take wavelet frequency $f_{re}=15$ and scaling factor $A=e^{-100((x-0.5)^2+(y-0.5)^2)}$ so that the peak is centered in the spatial domain and at time $t=0.1$.  We solve the governing equations with $N=41^2$ grid points and plot the solutions at three different time points $t=0.1, 0.5, 2$. In Figure \ref{fig:ex3}, we observe from the first two plots that the wave propagates symmetrically from the center of the domain where the peak is located. At $t=2$, the wave has ready reached the boundary and is reflected by the homogeneous Neumann boundary condition.  

Moreover, we have also carried out a convergence study. We use the fourth order accurate SBP operators for spatial discretization, and solve the governing equation until time $t=0.5$. We compute solutions with different mesh resolution of $21^2,41^2,81^2,161^2$ grid points. Since the analytical solution does not exist, we compute errors by using the reference solution computed on a very fine mesh with $641^2$ grid points. In Figure \ref{fig:ex3}, we observe clearly the optimal convergence rate of fourth order.

\section{Conclusion}
We have developed an SBP-SAT finite difference method for the diffusive viscous wave equation in second order form. Our approach ensures stability through the derivation of discrete energy estimates, and we have further provided error estimates for governing equations with constant coefficients and variable coefficients. The numerical experiments conducted have revealed optimal convergence rates for both cases.

For scenarios where the domain boundary or internal geometrical structure deviates from rectangular shapes, discretizing the governing equation on curvilinear grids, similar to the wave equation \cite{Virta2014}, can be a viable strategy. Additionally, in cases involving complex geometries, a hybrid method combining finite difference techniques on Cartesian grids with the discontinuous Galerkin method on unstructured grids has shown promise. Recent developments in this area, such as for the wave equation \cite{Wang2023}, inspire future work toward generalizing such hybrid methods for the diffusive viscous wave equation.

\bibliographystyle{plain} 
\bibliography{Siyang_References} 

\end{document}